\newtheorem{thm}{Theorem}[section]
\newtheorem{lem}[thm]{Lemma}
\newtheorem{prop}[thm]{Proposition}
\newtheorem{rem}[thm]{\bf{Remark}}
\newtheorem{exam}[thm]{Example}
\theoremstyle{definition}
\newcommand{\seq}[1]{\langle #1\rangle}
\title{Vertex-transitive Neumaier graphs}
\author{Mojtaba Jazaeri}
\address{Department of Mathematics, Shahid Chamran University of Ahvaz, Ahvaz, Iran}
\email{M.Jazaeri@scu.ac.ir, M.Jazaeri@ipm.ir}
\begin{document}

\keywords{Neumaier graph, vertex-transitive graph, Cayley graph, strongly regular graph}
\subjclass{Primary: 05C25. Secondary: 05C69.}
\maketitle

\begin{abstract}
A graph $\Gamma$ is called edge-regular whenever it is regular and for any two adjacent vertices, the number of their common neighbors is independent of the choice of vertices. A clique $C$ in $\Gamma$ is called regular whenever for any vertex out of $C$, the number of its neighbors in $C$ is independent of the vertex. A Neumaier graph is a non-complete edge-regular graph with a regular clique. In this paper, we study vertex-transitive Neumaier graphs. We give a necessary and sufficient condition under which a vertex-transitive Neumaier graph is strongly regular. We also identify Neumaier Cayley graphs with small valency at most $10$ among vertex-transitive Neumaier graphs.
\end{abstract}

\section{\bf Introduction}

Neumaier \cite{Neumaier} studied regular cliques in an edge-regular graph and stated the question that is every edge-regular graph with a regular clique strongly regular? Recently, a non-complete edge-regular graph with a regular clique is called a Neumaier graph and it has attracted a great deal of attention among authors, see \cite{ACDKZ}, \cite{ADDK}, \cite{ADZ}. It is known that a Neumaier graph has diameter $2$ or $3$.  Soicher \cite{Soicher} studied cliques in an edge-regular graph. Greaves and Koolen \cite{GK} answered the Neumaier question by constructing an infinite number of examples of Neumaier graphs that are not strongly regular.  Evans  \cite{Evans} studied regular induced subgraphs in an edge-regular graph. Abiad et al. \cite{ADDK} studied Neumaier graphs with few eigenvalues and proved that there is no Neumaier graph with exactly four distinct eigenvalues. Abiad et al. \cite{ACDKZ} studied Neumaier graphs with at most $64$ vertices that are not strongly regular (among other results). They listed the feasible parameters in \cite[Table~1]{ACDKZ}. Abiad et al. completed the existence and nonexistence of some of those feasible parameters in \cite{ADZ}.

A vertex-transitive Neumaier graph has diameter $2$ (see Theorem \ref{main theorem 1} \eqref{vertex-transitive Neumaier graph diameter} below). Several of the known examples of Neumaier graphs are vertex-transitive, see \cite{Evans}, \cite{EGP}. We obtain a new equitable partition with four parts for a Neumaier graph by adding a new condition between the number of adjacent vertices of two special parts (see Theorem \ref{main theorem 1} \eqref{equitable partition}). Using this, we give a necessary and sufficient condition under which a vertex-transitive Neumaier graph is strongly regular (see Theorem \ref{main theorem 1} \eqref{vertex-transitive Neumaier graph diameter}). We characterize (vertex-transitive) strongly regular Neumaier graphs with small valency at most $10$ (see Proposition \ref{Strongly regular Neumaier Cayley graph up to valency 10}). We also identify Neumaier Cayley graphs among the known vertex-transitive Neumaier graphs with small valency at most $10$ (see Proposition \ref{strictly neumaier cayley graphs}).

\section{\bf Preliminaries}
In this paper, all graphs are connected, undirected and simple, i.e., there are no loops or multiple edges. From now on, let $\Gamma$ denote a graph with vertex set $V$. We denote the complement of $\Gamma$ by $\overline{\Gamma}$. Recall that a subset $ C\subseteq V$ is called a clique in $\Gamma$ whenever the induced subgraph on $C$ is complete. The clique $C$ is called a {\it regular clique with nexus $a$} whenever for any vertex out of $C$, the number of its neighbors in $C$ is $a$.
\begin{lem} (see \cite[p.~13]{ADDK}) \label{eigenvalues}
Let $\Gamma$ be a non-complete $k$-regular graph containing a regular clique of size $c$ with nexus $a$. Then $k$ and $c-a-1$ are two nonnegative distinct eigenvalues of $\Gamma$.
\end{lem}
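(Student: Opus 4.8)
The plan is to extract both eigenvalues from the two natural vectors attached to a regular clique: the all-ones vector $\mathbf{1}$ and the characteristic vector $\chi_C$ of the clique $C$, and then to read off nonnegativity and distinctness directly from the parameters. Write $A$ for the adjacency matrix of $\Gamma$ and $n=|V|$. Since $\Gamma$ is $k$-regular, $A\mathbf{1}=k\mathbf{1}$, so $k$ is an eigenvalue, and clearly $k\ge 0$. For the second eigenvalue I would compute $A\chi_C$ entrywise: if $u\in C$ then, as $C$ is a clique, $u$ has exactly $c-1$ neighbors in $C$, while if $u\notin C$ then by the nexus condition $u$ has exactly $a$ neighbors in $C$. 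Hence
\[
A\chi_C = a\,\mathbf{1} + (c-a-1)\,\chi_C .
\]

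Together with $A\mathbf{1}=k\mathbf{1}$, this identity shows that $W=\mathrm{span}\{\mathbf{1},\chi_C\}$ is $A$-invariant. Because $\Gamma$ is non-complete we have $\emptyset\neq C\neq V$, so $\mathbf{1}$ and $\chi_C$ are linearly independent and $\dim W=2$. In the basis $\{\mathbf{1},\chi_C\}$ the operator $A|_W$ has the upper-triangular matrix $\left(\begin{smallmatrix} k & a \\ 0 & c-a-1\end{smallmatrix}\right)$, whose characteristic polynomial $(x-k)\bigl(x-(c-a-1)\bigr)$ divides that of $A$. Therefore both $k$ and $c-a-1$ are eigenvalues of $A$. (Alternatively, once distinctness is established, one checks directly that $\chi_C+\tfrac{a}{\,c-a-1-k\,}\mathbf{1}$ is an eigenvector for the eigenvalue $c-a-1$.)

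It then remains to prove $c-a-1\ge 0$ and $c-a-1\neq k$, and this is where the connectivity and non-completeness hypotheses are essential. For $a\le c-1$, i.e.\ $c-a-1\ge 0$: the nexus trivially satisfies $a\le c$, and if $a=c$ then every external vertex is adjacent to all of $C$, forcing each clique vertex to have degree $(c-1)+(n-c)=n-1$ and hence $\Gamma=K_n$, contradicting non-completeness. For distinctness I would first note $a\ge 1$, since $a=0$ would leave no edge between $C$ and the nonempty set $V\setminus C$, contradicting connectedness. Combining $a\ge 1$ with the trivial bound $k\ge c-1$ (each clique vertex already has its $c-1$ clique-neighbors) gives $c-a-1\le c-2 < c-1\le k$, so the two eigenvalues are distinct.

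The only genuinely delicate point is the logical ordering. The clean extraction of $c-a-1$ should go through the invariant subspace $W$, whose characteristic-polynomial argument needs no distinctness assumption; the explicit eigenvector $\chi_C+\beta\mathbf{1}$ instead requires $k\neq c-a-1$ and would otherwise make the argument circular. Everything else reduces to the two elementary inequalities $1\le a\le c-1$, obtained respectively from connectivity and from non-completeness.
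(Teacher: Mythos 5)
Your proof is correct and follows essentially the same route as the paper: the span of $\mathbf{1}$ and $\chi_C$ is exactly the invariant subspace attached to the equitable partition $\{C,V\setminus C\}$ whose quotient matrix the paper uses, and your triangular matrix $\left(\begin{smallmatrix} k & a \\ 0 & c-a-1\end{smallmatrix}\right)$ is just that quotient matrix in a different basis. You merely spell out the linear algebra (and the inequalities $1\le a\le c-1\le k$, which the paper compresses into ``$a<c<k+1$'') rather than citing the standard lemma on quotient matrices.
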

\begin{proof}
Let $C$ be a regular clique of size $c$ with nexus $a$ in $\Gamma$. Then $C$ gives rise to an equitable partition for $\Gamma$ with the following quotient matrix.
\begin{center}
\begin{equation*}
  \left(
    \begin{array}{cc}
      c-1 & k-c+1 \\
      a & k-a \\
    \end{array}
  \right)
\end{equation*}
\end{center}
It follows that $k$ and $c-a-1$ are two eigenvalues of $\Gamma$ (cf. \cite[Lemma~2.3.1]{BH}). Moreover, $a<c<k+1$ because $\Gamma$ is not a complete graph. This completes the proof.
\end{proof}
\begin{lem} (cf. \cite[Lemma~2.5(i)]{EGP}) \label{Counting}
Let $\Gamma$ be a $k$-regular graph containing a regular clique of size $c$ with nexus $a$. Then $c(k-c+1)=(n-c)a$.
\end{lem}
\begin{proof}
Let $C$ be a regular clique of size $c$ with nexus $a$ in $\Gamma$. Then the result follows by the double counting method for the number of edges between the regular clique $C$ and out of $C$.
\end{proof}
A graph $\Gamma$ with $n$ vertices is called {\it edge-regular} with parameters $(n,k,\lambda)$ whenever it is $k$-regular and for any two adjacent vertices the number of their common neighbors is $\lambda$. A non-complete edge-regular graph with a regular clique is called a {\it Neumaier graph}. A Neumaier graph that is not strongly regular is called a {\it strictly Neumaier graph} (cf. \cite{EGP}). It is known that if $\Gamma$ is a Neumaier graph, then the size and nexus of any regular clique in $\Gamma$ is independent of the choice of the regular clique (cf. \cite[Theorem~1.1]{Neumaier}). A Neumaier graph $\Gamma$ is said to have parameters $(n,k,\lambda;a,c)$ whenever $\Gamma$ is edge-regular with parameters $(n,k,\lambda)$ containing a regular clique of size $c$ with nexus $a$. We note that a Neumaier graph has diameter $2$ or $3$ because it has a regular clique with nexus $a \geq 1$.

A {\it strongly regular graph} with parameters $(n,k,\lambda,\mu)$ is a non-complete connected $k$-regular graph with $n$ vertices such that two vertices have $\lambda$ and $\mu$ common neighbors depending on whether these two vertices are adjacent or non-adjacent, respectively. A strongly regular graph with parameters $(n,k,\lambda,\mu)$ is called nontrivial whenever $0 < \mu < k$ (cf. \cite[\S~1.1]{BCN}). Recall that a strongly regular graph has diameter $2$ and exactly three distinct eigenvalues (cf. \cite[\S~1.3]{BCN}). Let $\Gamma$ be a strongly regular graph with parameters $(n,k,\lambda,\mu)$. Assume that $\Gamma$ is a Neumaier graph with parameters $(n,k,\lambda;a,c)$. Then we say that $\Gamma$ is a strongly regular Neumaier graph with parameters $(n,k,\lambda,\mu;a,c)$.

Let $G$ be a (finite) group and $S$ be an inverse-closed subset of $G$ without the identity element. Then the {\it Cayley graph} $Cay(G,S)$ is a graph whose vertex set is $G$ and two vertices $a,b$ are adjacent whenever $ab^{-1} \in S$. We call the subset $S$ of $G$, the connection set of $Cay(G,S)$. From now on, let $Cay(G,S)$ denote a Cayley graph over the group $G$ with the connection set $S$, and $e$ denote the identity element of $G$. We note that $Cay(G,S)$ is connected if and only if the subgroup generated by $S$ (which is denoted by $\seq{S}$) equals $G$ (cf. \cite[Lemma~3.7.4]{GR}). We call a Neumaier graph that is Cayley a {\it Neumaier Cayley graph}. We denote the dihedral group of order $n$ by $D_{n}$, the symmetric group on $n$ letters by $S_{n}$, the alternating group on $n$ letters by $A_{n}$, and the cyclic additive group modulo $n$ by $\mathbb{Z}_{n}$. We note that a circulant graph with $n$ vertices is a Cayley graph over the cyclic group $\mathbb{Z}_{n}$.

A {\it vertex-transitive} graph is a graph whose automorphism group acts transitively on its vertex set. We denote the automorphism of a graph $\Gamma$ by $Aut(\Gamma)$. Recall that a Cayley graph is vertex-transitive (cf. \cite[\S~3.7]{GR}).
\section{\bf Neumaier graphs}
Recall that a Neumaier graph has diameter $2$ or $3$.
\begin{lem} \label{Complete multipartite graph}
Let $\Gamma$ be a Neumaier graph with parameters $(n,k,\lambda;a,c)$. If $\lambda=0$, then $c=2$, $a=1$, and $\Gamma$ is the complete bipartite graph $K_{k,k}$. Moreover, if $c=2$, then $a=1$, $\lambda=0$, and $\Gamma$ is the complete bipartite graph $K_{k,k}$.
\end{lem}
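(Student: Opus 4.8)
The plan is to prove the two implications separately, in each case pinning down the parameters $a$ and $c$ first and only then identifying the graph. Throughout I fix a regular clique $C$ of size $c$ with nexus $a$, and I use two standing facts: that $a \ge 1$ (if $a=0$ no vertex outside $C$ is joined to $C$, so $C$ is a connected component and $\Gamma$ would be complete, contrary to hypothesis), and that $a < c$ by Lemma \ref{eigenvalues}. These already force $c \ge 2$.

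For the first statement I would assume $\lambda = 0$, so $\Gamma$ is triangle-free. Any three vertices of a clique form a triangle, hence every clique has at most two vertices; in particular $c \le 2$, and combined with $c > a \ge 1$ this gives $c = 2$ and $a = 1$. Feeding these into Lemma \ref{Counting} yields $2(k-1) = (n-2)\cdot 1$, i.e. $n = 2k$. To identify the graph, write $C = \{x,y\}$. Since each of the $2k-2$ remaining vertices has exactly one neighbor in $C$, they partition as $A \sqcup B$ with $A = N(x)\setminus\{y\}$ and $B = N(y)\setminus\{x\}$, each of size $k-1$. Triangle-freeness makes $N(x)$ and $N(y)$ independent, while the nexus condition forbids edges from $x$ to $B$ and from $y$ to $A$; therefore $\{x\}\cup B$ and $\{y\}\cup A$ are independent sets of size $k$ that partition $V$. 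A $k$-regular bipartite graph whose two parts both have size $k$ must be complete, so $\Gamma = K_{k,k}$.

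For the second statement I would assume $c = 2$. Then $1 \le a < c = 2$ forces $a = 1$ directly from Lemma \ref{eigenvalues}. Writing $C = \{x,y\}$ once more, any common neighbor of the adjacent pair $x,y$ would lie outside $C$ with two neighbors in $C$, contradicting $a = 1$; hence $x$ and $y$ have no common neighbor, and by edge-regularity $\lambda = 0$. The first statement then applies and gives $\Gamma = K_{k,k}$. The only genuine work is the structural identification in the first part, and even there the step is routine once $n = 2k$ and $a = 1$ are in hand: the whole difficulty is organizing the neighborhoods into the correct bipartition, after which $k$-regularity closes the argument.
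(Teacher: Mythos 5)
Your argument is correct and complete: both implications are established, the parameter deductions ($a\ge 1$ from connectedness, $a<c$ from Lemma \ref{eigenvalues}, $n=2k$ from Lemma \ref{Counting}) are all valid, and the bipartition $\{x\}\cup B$, $\{y\}\cup A$ together with $k$-regularity does pin down $K_{k,k}$. The paper dismisses this lemma with ``by the definition of a Neumaier graph,'' so your write-up simply supplies in full the routine verification the author leaves implicit; there is no divergence in method.
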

\begin{proof}
By the definition of a Neumaier graph.
\end{proof}
\begin{lem} \label{at most vertices}
Let $\Gamma$ be a Neumaier graph with diameter $2$. Then $\Gamma$ has at most $\max \{1+k+k(k-2),2k\}$ vertices.
\end{lem}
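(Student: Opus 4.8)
The plan is to establish a Moore-type inequality by counting vertices according to their distance from a fixed vertex, and then to dispose of the degenerate case $\lambda=0$ separately using Lemma~\ref{Complete multipartite graph}.

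First I would fix an arbitrary vertex $v$ and partition $V$ by distance from $v$. Since $\Gamma$ has diameter $2$, every vertex lies at distance $0$, $1$, or $2$ from $v$: there is exactly one vertex at distance $0$ (namely $v$), exactly $k$ vertices at distance $1$ (the neighbors of $v$), and all remaining $n-1-k$ vertices at distance $2$.

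The key step is to bound the number of distance-$2$ vertices. Each such vertex is adjacent to at least one neighbor of $v$, so it suffices to count, for each neighbor $u$ of $v$, how many neighbors of $u$ can lie at distance $2$. Edge-regularity tells us that the edge $vu$ has exactly $\lambda$ common neighbors, all of which lie at distance $1$ from $v$; together with $v$ itself, this accounts for $1+\lambda$ of the $k$ neighbors of $u$, leaving at most $k-1-\lambda$ neighbors of $u$ at distance $2$ from $v$. Summing over the $k$ neighbors of $v$ shows that there are at most $k(k-1-\lambda)$ vertices at distance $2$, whence
\begin{equation*}
n \le 1 + k + k(k-1-\lambda).
\end{equation*}

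Finally I would split into cases according to $\lambda$. If $\lambda\ge 1$, then $k-1-\lambda\le k-2$, so the displayed bound yields $n\le 1+k+k(k-2)$. If instead $\lambda=0$, then Lemma~\ref{Complete multipartite graph} forces $\Gamma$ to be the complete bipartite graph $K_{k,k}$, which has exactly $2k$ vertices. In either case $n\le\max\{1+k+k(k-2),\,2k\}$, as claimed. I expect the only real points to verify are that, under diameter $2$, every non-neighbor of $v$ is reachable through some neighbor of $v$ (so that the distance-$2$ count is genuinely bounded by the sum), and that edge-regularity makes the per-neighbor contribution uniformly $k-1-\lambda$; these are precisely the two hypotheses the counting exploits.
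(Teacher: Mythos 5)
Your proof is correct and follows essentially the same route as the paper: the same case split on $\lambda=0$ versus $\lambda\ge 1$, with the $\lambda=0$ case handled by Lemma~\ref{Complete multipartite graph} and the other case by the Moore-type bound $n\le 1+k+k(k-1-\lambda)\le 1+k+k(k-2)$. The only difference is that you prove that bound directly by the distance-counting argument, whereas the paper simply cites \cite[Proposition~1.4.1]{BCN} (and states its case as ``$\lambda>1$'', so your explicit coverage of $\lambda=1$ is, if anything, slightly more careful).
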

\begin{proof}
If $\lambda=0$, then $\Gamma$ has $2k$ vertices by Lemma \ref{Complete multipartite graph}. If $\lambda>1$, then $\Gamma$ has at most $1+k+k(k-2)$ vertices by \cite[Proposition~1.4.1]{BCN}. This completes the proof.
\end{proof}
\begin{thm} \label{main theorem 1}
Let $\Gamma$ be a Neumaier graph with vertex set $V$ and parameters $(n,k,\lambda;a,c)$. Assume that $C \subset V$ is a regular clique with nexus $a$ and $e \in C$ is an arbitrary vertex. Let $S$ denote the set of neighbors of $e$ in $\Gamma$. Then the following items hold.
\begin{enumerate}[$(i)$]
\item The induced subgraph on $S \setminus C$ is regular. Moreover, if the number of neighbors of any vertex in $V \setminus (S\cup\{e\})$ to the vertices in $S \setminus C$ is independent of the choice of vertex, then $\{\{e\},C \setminus \{e\},S \setminus C,V \setminus (S\cup\{e\})\}$ is an equitable partition for $\Gamma$. \label{equitable partition}
\item Let $\Gamma$ be strongly regular. Then the number of neighbors of any vertex in $V \setminus (S\cup\{e\})$ to the vertices in $S \setminus C$ is independent of the vertex. \label{Strongly regular condition}
\item Let $\Gamma$ be vertex-transitive. Then $\Gamma$ has diameter $2$. Moreover, $\Gamma$ is strongly regular if and only if the number of neighbors of any vertex in $V \setminus (S\cup\{e\})$ to the vertices in $S \setminus C$ is independent of the vertex.
    \label{vertex-transitive Neumaier graph diameter}
\end{enumerate}
\end{thm}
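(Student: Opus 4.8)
The plan is to treat the three assertions of part $(iii)$ separately, with the diameter claim carrying most of the work. First I would use vertex-transitivity to promote the single regular clique $C$ into a regular clique through an \emph{arbitrary} vertex. Fix $c_0 \in C$ and any target vertex $v$, and choose $\phi \in Aut(\Gamma)$ with $\phi(c_0) = v$. Since $\phi$ preserves adjacency, $\phi(C)$ is again a clique, and for any $w \notin \phi(C)$ the number of neighbours of $w$ in $\phi(C)$ equals the number of neighbours of $\phi^{-1}(w)$ in $C$, namely $a$; hence $\phi(C)$ is a regular clique of nexus $a$ containing $v$. I would then invoke the elementary fact, valid for \emph{any} regular clique, that every vertex lies at distance at most $1$ from it: because $a \geq 1$, each vertex outside $\phi(C)$ has a neighbour inside it. Consequently, for any $u \in V$ either $u \in \phi(C)$, giving $d(v,u) \leq 1$, or $u$ is adjacent to some $z \in \phi(C)$, and since $v, z \in \phi(C)$ are equal or adjacent we obtain $d(v,u) \leq 2$. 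As $\Gamma$ is non-complete, its diameter is exactly $2$.

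The forward implication of the stated equivalence requires no new argument: it is precisely the content of part $(ii)$, which already establishes that strong regularity forces the number of neighbours from $V \setminus (S \cup \{e\})$ into $S \setminus C$ to be independent of the chosen vertex.

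For the reverse implication I would count, for the fixed base vertex $e$, the common neighbours of $e$ and an arbitrary non-neighbour $u$. Each such common neighbour is adjacent to $e$ and hence lies in $S$, and $S$ decomposes as $(C \setminus \{e\}) \sqcup (S \setminus C)$ since $e \notin S$. Now $u \notin C$ (otherwise $u \in C \setminus \{e\}$ would force $u \sim e$), so the regular-clique property yields exactly $a$ neighbours of $u$ in $C$, and because $u \not\sim e$ all of these lie in $C \setminus \{e\}$; the assumed constancy supplies a fixed number $x$ of neighbours of $u$ in $S \setminus C$. Thus $e$ and every one of its non-neighbours share exactly $a + x$ common neighbours. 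I would then transport this count along automorphisms: given any non-adjacent pair $(v,w)$, pick $\psi \in Aut(\Gamma)$ with $\psi(e) = v$, so that $(v,w)$ and $(e, \psi^{-1}(w))$ have the same number of common neighbours, namely $a + x$. Combined with edge-regularity, which makes adjacent vertices share $\lambda$ neighbours, this shows $\Gamma$ is strongly regular with $\mu = a + x$.

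The main obstacle is the diameter step, and specifically the observation that vertex-transitivity is exactly what is needed to guarantee a regular clique through \emph{every} vertex; this is the feature that fails for a general Neumaier graph and is what rules out diameter $3$ here. Once that is in place, the domination property of regular cliques closes the diameter argument immediately, and the reverse implication reduces to the routine decomposition $S = (C \setminus \{e\}) \sqcup (S \setminus C)$ together with the nexus count of $a$.
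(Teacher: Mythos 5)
Your argument for part $(iii)$ is correct and follows essentially the same route as the paper. The paper fixes $e$ and observes that every vertex of $V \setminus (S\cup\{e\})$ has $a \geq 1$ neighbours in $C \setminus \{e\} \subseteq S$, hence lies at distance $2$ from $e$, and then transports this to an arbitrary vertex by vertex-transitivity; you instead transport the clique itself and apply the same domination property, which is the same idea in a different order. Your reverse implication (each non-neighbour of $e$ has exactly $a + x$ common neighbours with $e$, and automorphisms carry this count to every non-adjacent pair) is likewise exactly the paper's argument, only spelled out more explicitly.

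The gap is one of coverage: the theorem has three parts, and your proposal proves only part $(iii)$, while explicitly leaning on part $(ii)$ for the forward implication of the equivalence. Part $(i)$ --- that the induced subgraph on $S \setminus C$ is regular (of valency $\lambda - a + 1$) and that the four-block partition is equitable under the stated hypothesis --- is never addressed; it requires the routine counts that a vertex of $C \setminus \{e\}$ has $c-2$, $\lambda - c + 2$ and $k - \lambda - 1$ neighbours in $C \setminus \{e\}$, $S \setminus C$ and $V \setminus (S\cup\{e\})$ respectively, while a vertex of $S \setminus C$ has $a-1$, $\lambda - a + 1$ and $k - \lambda - 1$. Part $(ii)$ is also left unproven, although the decomposition $S = (C\setminus\{e\}) \sqcup (S\setminus C)$ and the nexus count that you already use in the reverse direction give it at once: if $\Gamma$ is strongly regular, a vertex $u \in V \setminus (S\cup\{e\})$ has $\mu$ common neighbours with $e$, exactly $a$ of which lie in $C \setminus \{e\}$, so it has the constant number $\mu - a$ of neighbours in $S \setminus C$. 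The missing pieces are therefore within reach of what you wrote, but as submitted the proposal does not establish the full statement.
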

\begin{proof}
\eqref{equitable partition}: We note that $C \setminus \{e\} \subset S$. This implies that every vertex in $C \setminus \{e\}$ has $c-2$ neighbors in $C \setminus \{e\}$, $\lambda-c+2$ neighbors in $S \setminus C$ and therefore $k-\lambda-1$ neighbors in $V \setminus (S\cup\{e\})$. Moreover, every vertex in $S \setminus C$  has $a-1$ neighbors in $C \setminus \{e\}$, $\lambda-a+1$ neighbors in $S \setminus C$, and therefore $k-\lambda-1$ neighbors in $V \setminus (S\cup\{e\})$. Furthermore, every vertex in $V \setminus (S\cup\{e\})$ has the same number $a$ neighbors in $C$ because $C$ is a regular clique with nexus $a$. Therefore the result follows by the hypothesis and noting that $\Gamma$ is $k$-regular (see Figure \ref{Fig1} below).

\eqref{Strongly regular condition}: By \eqref{equitable partition}, noting that $e$ is non-adjacent to any vertex in $V \setminus (S\cup\{e\})$, and the definition of a strongly regular graph.

\eqref{vertex-transitive Neumaier graph diameter}: The distance between $e$ and an arbitrary vertex in $V$ is at most $2$ by \eqref{equitable partition} and noting that $a>0$. Let $x,y \in V$. Then there is an automorphism $\sigma \in Aut(\Gamma)$ such that $\sigma(x)=e$ since $\Gamma$ is vertex-transitive. Let $\sigma(y)=y'$. Then the distance between $x,y$ is the same as $e,y'$. Assume that the distance between $x,y$ is $2$. Then $x,y$ have the same number of common neighbors as $e,y'$. Therefore the result follows by \eqref{equitable partition}, \eqref{Strongly regular condition}.
\end{proof}
\begin{prop} \label{The number of edges}
Let $\Gamma$ be a Neumaier graph with parameters \, \, \, $(n,k,\lambda;a,c)$. Then $\Gamma$ has at least the following number of edges:
\begin{equation*}
k(k-\lambda)+(k-c+1)(a-1)+\frac{(k-c+1)(\lambda-a+1)}{2}+\frac{(c-1)(c-2)}{2}.
\end{equation*}
\end{prop}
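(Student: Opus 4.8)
The plan is to count the edges of $\Gamma$ directly from the four-part partition $\{\{e\},C\setminus\{e\},S\setminus C,V\setminus(S\cup\{e\})\}$ furnished by Theorem \ref{main theorem 1}(\ref{equitable partition}), using the per-part valencies recorded in its proof and in Figure \ref{Fig1}. Writing $A=C\setminus\{e\}$, $B=S\setminus C$, and $R=V\setminus(S\cup\{e\})$, these parts have sizes $1$, $c-1$, $k-c+1$, and $n-k-1$, and the valencies computed in Theorem \ref{main theorem 1}(\ref{equitable partition}) pin down the number of edges within and between every pair of parts, with the single exception of the edges lying entirely inside $R$.

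First I would record the edge counts forced by the clique and the regularity data. Since $C$ is a clique, $A$ spans $\binom{c-1}{2}=\frac{(c-1)(c-2)}{2}$ edges; since the induced subgraph on $B$ is regular of valency $\lambda-a+1$, it spans $\frac{(k-c+1)(\lambda-a+1)}{2}$ edges; and since each vertex of $B$ has $a-1$ neighbors in $A$, the number of $A$--$B$ edges is $(k-c+1)(a-1)$. These three quantities are exactly the last three summands of the claimed bound.

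Next I would package the remaining known edges, namely those incident to $e$ together with those joining $S$ to $R$, into the leading term. The vertex $e$ contributes its $k$ edges to $S=A\cup B$. Each vertex of $A$ and each vertex of $B$ has $k-\lambda-1$ neighbors in $R$, so the number of $S$--$R$ edges is $\big((c-1)+(k-c+1)\big)(k-\lambda-1)=k(k-\lambda-1)$. Adding the $k$ edges at $e$ gives $k+k(k-\lambda-1)=k(k-\lambda)$, which is the first summand. Thus every edge of $\Gamma$ except those with both endpoints in $R$ has been counted, and their total equals the stated expression exactly.

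The only edges not accounted for are those inside $R$, whose number is nonnegative; summing the contributions above therefore yields the asserted lower bound, with equality precisely when $R$ is an independent set. This argument is essentially a bookkeeping exercise once the partition is available, so there is no genuine obstacle; the only point requiring a little care is the recognition that the leading term $k(k-\lambda)$ fuses two a priori unrelated edge families (the $k$ edges at $e$ and the $k(k-\lambda-1)$ edges from $S$ to $R$), and the implicit consistency of the two ways of counting $A$--$B$ edges, which is exactly the content of Lemma \ref{Double counting 1}.
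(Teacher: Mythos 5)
Your proposal is correct and follows exactly the paper's argument: both count the $k$ edges at $e$, the $k(k-\lambda-1)$ edges from $S$ to $V\setminus(S\cup\{e\})$, and the edges inside $S$ via the partition of Theorem \ref{main theorem 1}(\ref{equitable partition}), then drop the uncounted edges inside $V\setminus(S\cup\{e\})$ to get the lower bound. Your write-up is somewhat more explicit (noting the equality case and the consistency check via Lemma \ref{Double counting 1}), but the route is the same.
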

\begin{proof}
We count the number of edges of $\Gamma$ by Theorem \ref{main theorem 1}(\ref{equitable partition}). The number of edges from $e$ to $S$ and $S$ to $V \setminus (S\cup\{e\})$ is $k$ and $k(k-\lambda -1)$, respectively. Moreover, the number of edges in $S$ equals
\begin{equation*}
(k-c+1)(a-1)+\frac{(k-c+1)(\lambda-a+1)}{2}+\frac{(c-1)(c-2)}{2}.
\end{equation*}
This completes the proof.
\end{proof}
\begin{center}
\begin{figure}[h]
 \centering
 \includegraphics*[width=1.5\linewidth]{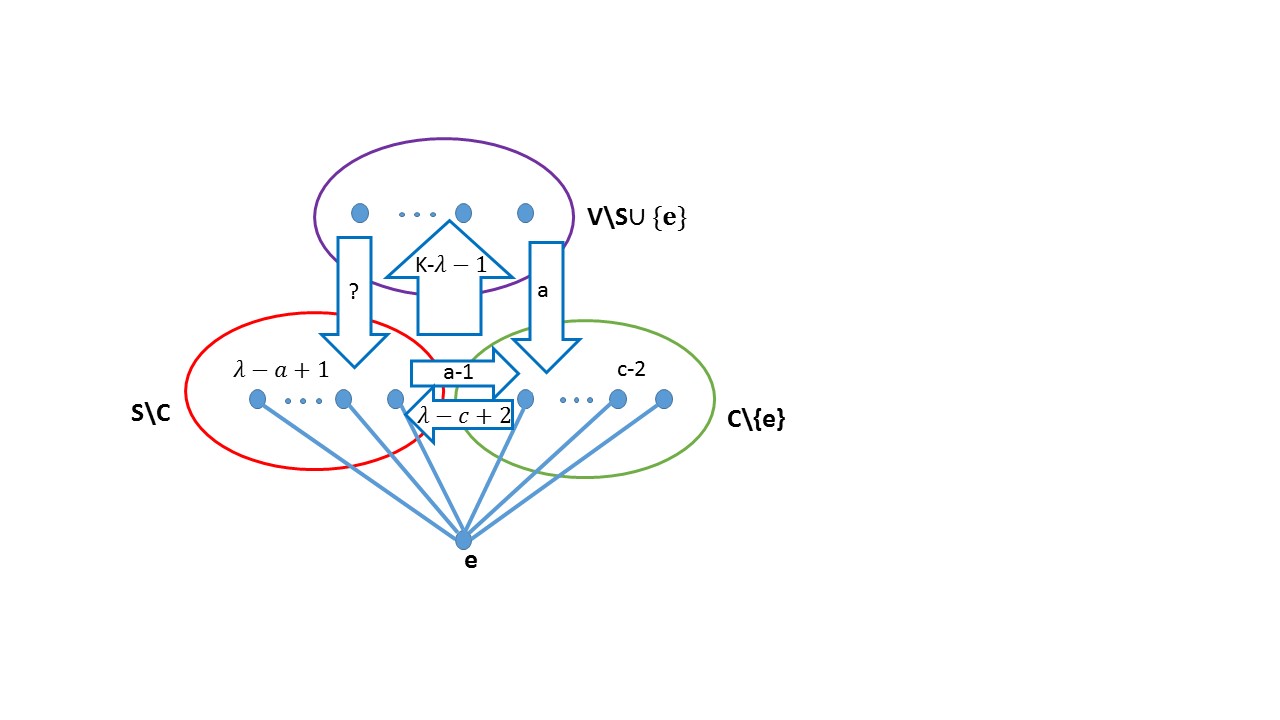}
 \label{diagram}
 \caption{A Neumaier graph with parameters $(n,k,\lambda;a,c)$. Let $C \subset V$ denote a regular clique with nexus $a$ containing an arbitrary element $e$ and $S$ denote the set of neighbors of $e$. The notation on the arrow between two partitions is the number of adjacent vertices from one vertex in a partition to the vertices of another partition. The notation in a partition is the valency of the regular induced subgraph on that partition.} \label{Fig1}
\end{figure}
\end{center}
\begin{lem} (cf. \cite[Lemma~2.5(ii)]{EGP}) \label{Double counting 1}
Let $\Gamma$ be a Neumaier graph with parameters $(n,k,\lambda;a,c)$. Then $(k-c+1)(a-1)=(c-1)(\lambda - c +2)$.
\end{lem}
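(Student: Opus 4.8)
The plan is to establish the identity by double counting the number of edges between the two parts $C \setminus \{e\}$ and $S \setminus C$ of the equitable partition from Theorem \ref{main theorem 1} \eqref{equitable partition}. All of the relevant local data has already been recorded there, so the argument reduces to counting one quantity in two ways and equating the results.

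First I would fix a regular clique $C$ of size $c$ with nexus $a$ together with an arbitrary vertex $e \in C$, and let $S$ be the set of neighbors of $e$, exactly as in the setup of Theorem \ref{main theorem 1}. Since $C \setminus \{e\} \subseteq S$ and $\Gamma$ is $k$-regular, we have $|C \setminus \{e\}| = c - 1$ and $|S| = k$, which gives $|S \setminus C| = k - (c-1) = k - c + 1$.

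Next I would count the edges joining $C \setminus \{e\}$ and $S \setminus C$ from each side. By Theorem \ref{main theorem 1} \eqref{equitable partition}, every vertex of $C \setminus \{e\}$ has exactly $\lambda - c + 2$ neighbors in $S \setminus C$, so summing over the $c-1$ vertices of $C \setminus \{e\}$ produces $(c-1)(\lambda - c + 2)$ such edges. On the other hand, every vertex of $S \setminus C$ has exactly $a-1$ neighbors in $C \setminus \{e\}$, so summing over the $k - c + 1$ vertices of $S \setminus C$ produces $(k - c + 1)(a - 1)$ such edges. Equating the two counts yields $(k - c + 1)(a - 1) = (c - 1)(\lambda - c + 2)$, as required.

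Because the needed neighbor counts are already furnished by the equitable partition, there is essentially no obstacle; the only care required is the bookkeeping around $e$. In particular, a vertex of $S \setminus C$ is adjacent to $e$ and hence, by the nexus condition, has $a - 1$ (rather than $a$) neighbors in $C \setminus \{e\}$, while the $\lambda$ common neighbors of an edge inside $C$ split as $c - 2$ vertices within $C$ and $\lambda - c + 2$ vertices in $S \setminus C$. If one prefers to avoid invoking the full partition, these same two neighbor counts follow directly from edge-regularity and the definition of a regular clique.
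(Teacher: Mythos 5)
Your proof is correct and follows exactly the paper's approach: the paper also proves this identity by double counting the edges between $C \setminus \{e\}$ and $S \setminus C$ using the neighbor counts from Theorem \ref{main theorem 1}\eqref{equitable partition}. You have simply spelled out the details that the paper leaves implicit.
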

\begin{proof}
By the double counting method for the number of edges between $S \setminus C$ and $C \setminus \{e\}$ in Theorem \ref{main theorem 1}(\ref{equitable partition}).
\end{proof}
\begin{lem} \label{Double counting 2}
Let $\Gamma$ be a Neumaier graph with parameters $(n,k,\lambda;a,c)$. Then $(c-1)(k-\lambda-1)=(n-k-1)a$.
\end{lem}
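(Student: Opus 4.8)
The plan is to prove this identity by double counting the edges between the two parts $C\setminus\{e\}$ and $V\setminus(S\cup\{e\})$ of the partition introduced in Theorem \ref{main theorem 1}\eqref{equitable partition}. Since the claimed equation is symmetric in form to the counting identities already established (Lemma \ref{Counting} and Lemma \ref{Double counting 1}), I expect the proof to follow the same template: fix a regular clique $C$ with nexus $a$, fix an arbitrary vertex $e\in C$, let $S$ be the set of neighbors of $e$, and count the edge set between $C\setminus\{e\}$ and $V\setminus(S\cup\{e\})$ in two ways.

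First I would record the sizes of the two relevant parts. Since $\Gamma$ is $k$-regular, $S$ has $k$ vertices, and since $C\setminus\{e\}\subseteq S$, the part $V\setminus(S\cup\{e\})$ has exactly $n-k-1$ vertices, while $C\setminus\{e\}$ has $c-1$ vertices. Next I would count the edges from the $C\setminus\{e\}$ side: by Theorem \ref{main theorem 1}\eqref{equitable partition}, each vertex of $C\setminus\{e\}$ has exactly $k-\lambda-1$ neighbors in $V\setminus(S\cup\{e\})$, giving a total of $(c-1)(k-\lambda-1)$ edges.

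For the other direction I would count from the $V\setminus(S\cup\{e\})$ side. The point to be careful about is that a vertex $v\in V\setminus(S\cup\{e\})$ is by definition neither $e$ nor a neighbor of $e$, so $v$ is non-adjacent to $e$; hence although the regular clique condition guarantees that $v$ has exactly $a$ neighbors in $C$, none of those neighbors is $e$, and therefore all $a$ of them lie in $C\setminus\{e\}$. This yields $(n-k-1)a$ edges from this side. Equating the two counts gives $(c-1)(k-\lambda-1)=(n-k-1)a$, as required.

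The only genuine obstacle is the observation in the last paragraph that the $a$ neighbors of a vertex of $V\setminus(S\cup\{e\})$ inside $C$ are confined to $C\setminus\{e\}$; everything else is a direct application of the regularity data already tabulated in Theorem \ref{main theorem 1}\eqref{equitable partition} and in Figure \ref{Fig1}. Note that this argument does not require the extra hypothesis that makes the partition equitable, since it only uses the per-vertex degree counts between these two specific parts, which hold for every Neumaier graph.
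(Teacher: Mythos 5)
Your proposal is correct and follows exactly the paper's approach: the paper proves this lemma by double counting the edges between $C\setminus\{e\}$ and $V\setminus(S\cup\{e\})$ using the degree counts from Theorem \ref{main theorem 1}\eqref{equitable partition}. You merely spell out the details (part sizes, and the observation that the $a$ neighbors in $C$ of a vertex outside $S\cup\{e\}$ all lie in $C\setminus\{e\}$) that the paper leaves implicit.
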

\begin{proof}
By the double counting method for the number of edges between $C \setminus \{e\}$ and $V \setminus (S\cup\{e\})$ in Theorem \ref{main theorem 1}(\ref{equitable partition}).
\end{proof}
Recall that a strongly regular graph has exactly three distinct eigenvalues. We have the following result about the eigenvalues of a strongly regular Neumaier graph.
\begin{prop} \label{Integer eigenvalues}
Let $\Gamma$ be a strongly regular Neumaier graph with parameters $(n,k,\lambda,\mu;a,c)$. Then $\Gamma$ has exactly three distinct eigenvalues $k$, $c-a-1$, and $-\frac{\mu}{a}$. Moreover, $-\frac{\mu}{a}$ is an integer.
\end{prop}
\begin{proof}
Let $-m$ be the smallest eigenvalue of $\Gamma$. Then the size of every regular clique admits the Hoffman bound, namely $c=1+\frac{k}{m}$, and $a=\frac{\mu}{m}$ by \cite[Proposition~1.3.2(ii)]{BCN}. This implies that $k$, $c-a-1$, and $-m=-\frac{\mu}{a}$ are all three distinct eigenvalues of $\Gamma$ by Lemma \ref{eigenvalues}. Moreover, $-\frac{\mu}{a}$ is a rational and algebraic integer. This completes the proof.
\end{proof}
\begin{prop} \label{Strongly regular Neumaier graph counting}
Let $\Gamma$ be a strongly regular Neumaier graph with parameters $(n,k,\lambda,\mu;a,c)$. Then
\begin{equation} \label{Strongly regular Neumaier graph Equation 1}
(k-c+1)(k-\lambda-1)=(n-k-1)(\mu-a),
\end{equation}
\begin{equation} \label{Strongly regular Neumaier graph Equation 2}
\mu(c-a-1)=a(k-\mu).
\end{equation}
\end{prop}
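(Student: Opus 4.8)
The plan is to read both identities directly off the four-part equitable partition $\{\{e\},\,C\setminus\{e\},\,S\setminus C,\,V\setminus(S\cup\{e\})\}$ furnished by Theorem \ref{main theorem 1}\eqref{equitable partition}; this partition is genuinely equitable here because $\Gamma$ is strongly regular, so the extra hypothesis of Theorem \ref{main theorem 1}\eqref{equitable partition} is satisfied by Theorem \ref{main theorem 1}\eqref{Strongly regular condition}. The first task is to pin down the one quotient-matrix entry not already recorded in the proof of Theorem \ref{main theorem 1}\eqref{equitable partition}, namely the number of neighbors that a vertex $w\in V\setminus(S\cup\{e\})$ has in $S\setminus C$. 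Since $w$ is not adjacent to $e$, it shares exactly $\mu$ common neighbors with $e$, and all of these lie in $S$ (the set of neighbors of $e$); moreover $w$ has $a$ neighbors in $C$ by the nexus condition, and these all lie in $C\setminus\{e\}$ because $w$ is not adjacent to $e$. Hence $w$ has exactly $\mu-a$ neighbors in $S\setminus C$, independently of $w$.

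With this entry in hand, Equation \eqref{Strongly regular Neumaier graph Equation 1} is a single double count of the edges joining $S\setminus C$ to $V\setminus(S\cup\{e\})$. Counting from the $S\setminus C$ side, each of its $k-c+1$ vertices has $k-\lambda-1$ neighbors in $V\setminus(S\cup\{e\})$ (as recorded in the proof of Theorem \ref{main theorem 1}\eqref{equitable partition}), giving $(k-c+1)(k-\lambda-1)$ edges; counting from the other side, each of the $n-k-1$ vertices of $V\setminus(S\cup\{e\})$ has $\mu-a$ neighbors in $S\setminus C$ by the previous paragraph, giving $(n-k-1)(\mu-a)$ edges. Equating the two counts yields \eqref{Strongly regular Neumaier graph Equation 1}.

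For Equation \eqref{Strongly regular Neumaier graph Equation 2} the cleanest route is through the eigenvalue information already established in Proposition \ref{Integer eigenvalues}: the smallest eigenvalue of $\Gamma$ is $-\mu/a$, and the Hoffman clique bound equality forces $c=1+ka/\mu$, that is, $\mu(c-1)=ak$. Rearranging, $\mu(c-a-1)=\mu(c-1)-\mu a=ak-\mu a=a(k-\mu)$, which is exactly \eqref{Strongly regular Neumaier graph Equation 2}. Alternatively, one can avoid eigenvalues entirely: combining Lemma \ref{Double counting 2}, namely $(c-1)(k-\lambda-1)=(n-k-1)a$, with the standard strongly regular identity $(n-k-1)\mu=k(k-\lambda-1)$ and cancelling the common factor $k-\lambda-1$ (nonzero since $\Gamma$ is not complete) again gives $\mu(c-1)=ak$. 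I expect no serious obstacle here: the only genuinely new ingredient is identifying the missing quotient entry as $\mu-a$, and the single place an error could creep in is the routine bookkeeping of the part sizes $c-1$, $k-c+1$, $n-k-1$ and the degrees between them, so the whole argument comes down to careful accounting together with Proposition \ref{Integer eigenvalues}.
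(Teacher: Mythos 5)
Your proof is correct and follows essentially the same route as the paper: Equation \eqref{Strongly regular Neumaier graph Equation 1} by double counting the edges between $S\setminus C$ and $V\setminus(S\cup\{e\})$ using the equitable partition of Theorem \ref{main theorem 1}\eqref{equitable partition} (you helpfully make explicit the $\mu-a$ quotient entry that the paper leaves implicit), and Equation \eqref{Strongly regular Neumaier graph Equation 2} from the eigenvalue data of Proposition \ref{Integer eigenvalues}. The only cosmetic difference is that for the second identity the paper invokes the eigenvalue product relation of \cite[Theorem~1.3.1(iii)]{BCN} whereas you use the Hoffman-bound equality $\mu(c-1)=ak$ already present in the proof of Proposition \ref{Integer eigenvalues}; both are equivalent one-line rearrangements.
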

\begin{proof}
We have \eqref{Strongly regular Neumaier graph Equation 1} by Theorem \ref{main theorem 1}(\ref{Strongly regular condition}) and the double counting method for the number of edges between $S \setminus C$ and $V \setminus (S\cup\{e\})$ in Theorem \ref{main theorem 1}(\ref{equitable partition}). We have \eqref{Strongly regular Neumaier graph Equation 2} by evaluating the eigenvalues of $\Gamma$ in Proposition \ref{Integer eigenvalues} and \cite[Theorem~1.3.1(iii)]{BCN}.
\end{proof}
\begin{lem} (cf. \cite[Theorem~3.3]{EGP}) \label{nexus one}
Let $\Gamma$ be a Neumaier graph with parameters $(n,k,\lambda;1,c)$. Then $\lambda=c-2$ and $k-2c+3>0$.
\end{lem}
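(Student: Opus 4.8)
The plan is to treat the two assertions separately: the equality $\lambda = c-2$ falls out of the counting identity already established, while the inequality $k-2c+3>0$ comes from the regularity of a single part of the equitable partition.

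First I would substitute $a=1$ into Lemma \ref{Double counting 1}, which reads $(k-c+1)(a-1)=(c-1)(\lambda-c+2)$. The left-hand side vanishes, leaving $(c-1)(\lambda-c+2)=0$. Since $1=a<c$ by Lemma \ref{eigenvalues}, we have $c\geq 2$, so $c-1\neq 0$; hence $\lambda-c+2=0$, that is, $\lambda=c-2$. This settles the first claim at once.

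For the inequality, I would exploit the equitable partition of Theorem \ref{main theorem 1}(\ref{equitable partition}), taking $e\in C$ and letting $S$ be the neighborhood of $e$. The relevant part is $S\setminus C$: by that theorem each of its vertices has exactly $\lambda-a+1$ neighbors inside $S\setminus C$, so the induced subgraph on $S\setminus C$ is regular of valency $\lambda-a+1=c-2$ (using $\lambda=c-2$ and $a=1$). Its number of vertices is $|S|-|C\setminus\{e\}|=k-(c-1)=k-c+1$, and since $c\leq k$ (again from Lemma \ref{eigenvalues}) this set is nonempty. A regular graph of valency $d$ on $m\geq 1$ vertices satisfies $d\leq m-1$, so $c-2\leq (k-c+1)-1=k-c$. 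Rearranging gives $k\geq 2c-2$, i.e. $k-2c+3\geq 1>0$.

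The argument is essentially routine once the equitable partition is in hand; the only points needing care are confirming $c\geq 2$, so that the factor $c-1$ may be cancelled, and confirming that $S\setminus C$ is nonempty before invoking the valency bound. Both follow from the eigenvalue inequalities $a<c\leq k$ of Lemma \ref{eigenvalues}, so I do not anticipate any genuine obstacle.
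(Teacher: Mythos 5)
Your proof is correct and follows essentially the same route as the paper's: the equality $\lambda=c-2$ is extracted from Lemma \ref{Double counting 1} with $a=1$, and the inequality $k-2c+3>0$ comes from observing that the induced subgraph on $S\setminus C$ is $(c-2)$-regular on $k-c+1$ vertices, forcing $k-c+1>c-2$. Your added checks that $c-1\neq 0$ and that $S\setminus C$ is nonempty are minor refinements of the same argument, not a different approach.
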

\begin{proof}
We have $\lambda=c-2$ by Lemma \ref{Double counting 1}. Moreover, the regular induced subgraph on $S \setminus C$ has valency $c-2$ by Theorem \ref{main theorem 1}(\ref{equitable partition}) since $\lambda=c-2$ and $a=1$. This implies that $k-c+1>c-2$ because the number of vertices in $S \setminus C$ is $k-c+1$. Therefore $k-2c+3>0$.
\end{proof}
\begin{prop} \label{Special cases}
Let $\Gamma$ be a Neumaier graph with parameters \, \, \, $(n,k,\lambda;a,c)$. If $c=k$, then $\Gamma$ is the cycle graph $C_{4}$.
\end{prop}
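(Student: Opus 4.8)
The plan is to push the extremal hypothesis $c=k$ through the counting identity of Lemma~\ref{Double counting 1} and to combine it with the eigenvalue constraint $a<c<k+1$ recorded in the proof of Lemma~\ref{eigenvalues}. Substituting $c=k$ into Lemma~\ref{Double counting 1} collapses the factor $k-c+1$ to $1$, so the identity $(k-c+1)(a-1)=(c-1)(\lambda-c+2)$ reduces at once to $a-1=(k-1)(\lambda-c+2)$. The quantity $\lambda-c+2$ is exactly the number of neighbors that a vertex of $C\setminus\{e\}$ has in $S\setminus C$ by Theorem~\ref{main theorem 1}(\ref{equitable partition}), hence it is a nonnegative integer.

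Next I would argue that $\lambda-c+2$ must vanish. Since $\Gamma$ is a Neumaier graph we have $a\ge 1$, and by Lemma~\ref{eigenvalues} we have $a<c=k$, so $a-1\le k-2$. If instead $\lambda-c+2\ge 1$, then $a-1=(k-1)(\lambda-c+2)\ge k-1>k-2$, a contradiction. Therefore $\lambda-c+2=0$ and $a=1$; equivalently $\lambda=c-2=k-2$.

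Having reduced to the nexus-one case with $c=k$, I would invoke Lemma~\ref{nexus one}, whose inequality $k-2c+3>0$ becomes $k-2k+3>0$, i.e.\ $k<3$. Since $a\ge 1$ together with $a<c=k$ forces $c\ge 2$ and hence $k\ge 2$, the only remaining possibility is $k=2$, so $c=2$. Finally, Lemma~\ref{Complete multipartite graph} applied to $c=2$ yields $\Gamma=K_{2,2}=C_{4}$, as claimed.

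The argument is short, and the work is conceptual rather than computational: the crux is recognizing that $c=k$ makes the clique so large (each of its $k$ vertices retains only a single neighbor outside $C$) that the counting identity of Lemma~\ref{Double counting 1} forces $a=1$ and $\lambda=c-2$ simultaneously. Once that is in place, the previously established Lemmas~\ref{nexus one} and~\ref{Complete multipartite graph} finish the proof immediately, so I do not anticipate any real obstacle beyond correctly tracking the inequalities $a\ge 1$, $a<c$, and $\lambda-c+2\ge 0$.
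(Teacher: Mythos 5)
Your proof is correct, and it follows the same overall skeleton as the paper's (a counting identity forces $a=1$, then Lemma~\ref{nexus one} forces $k<3$, then one identifies $C_4$), but the crucial step is carried out with a different identity. The paper combines Lemma~\ref{Counting} with Lemma~\ref{Double counting 2} to get $(k-1)(k-\lambda-1)=k-a$ and extracts $a=1$ from the inequality $k-1\geq k-a$; you instead substitute $c=k$ directly into Lemma~\ref{Double counting 1} to get $a-1=(k-1)(\lambda-c+2)$, and the comparison with $a-1\le k-2$ forces both $a=1$ and $\lambda=c-2$ in a single stroke. Your route is arguably a bit cleaner: it needs only one counting lemma rather than two, it pins down $\lambda$ explicitly, and it lets you finish by citing Lemma~\ref{Complete multipartite graph} for $c=2$ instead of computing $n$ from $k=(n-k)a$ as the paper does. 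The one point to keep an eye on is the inequality $a<c$, which you pull from the proof (rather than the statement) of Lemma~\ref{eigenvalues}; it also follows from the statement, since $c-a-1$ is asserted there to be a nonnegative eigenvalue. Both arguments are sound and of essentially the same length.
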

\begin{proof}
We have $k=(n-k)a$ by Lemma \ref{Counting}, and $(k-1)(k-\lambda-1)=(n-k-1)a$ by Lemma \ref{Double counting 2}. This implies that $(k-1)(k-\lambda-1)=k-a$. It follows that $a=1$ since $k-1 \geq k-a$. Therefore $k<3$ by Lemma \ref{nexus one}. This implies that $n=4$, $k=2$, and therefore $\Gamma$ is the cycle graph $C_{4}$.
\end{proof}
\begin{prop} \label{Neumaier circulant}
There is no nontrivial strongly regular Neumaier circulant graph.
\end{prop}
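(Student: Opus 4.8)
The plan is to pit Proposition~\ref{Integer eigenvalues}, which forces the spectrum of a strongly regular Neumaier graph to be integral, against the rigidity of the spectrum of a circulant graph. Suppose for contradiction that $\Gamma=\Cay(\mathbb{Z}_{n},S)$ is a nontrivial strongly regular Neumaier graph; thus $0<\mu<k$, and since $\Gamma$ is connected we have $\langle S\rangle=\mathbb{Z}_{n}$. By Proposition~\ref{Integer eigenvalues} the three distinct eigenvalues $k$, $r:=c-a-1\ge 0$, and $s:=-\frac{\mu}{a}<0$ are all rational integers.

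First I would record the shape of the circulant spectrum. With $\omega=e^{2\pi i/n}$, the eigenvalues of $\Gamma$ are $\lambda_{j}=\sum_{x\in S}\omega^{jx}$ for $0\le j\le n-1$, each a sum of $n$-th roots of unity and hence an algebraic integer of $\mathbb{Q}(\omega)$. As every eigenvalue is in fact a rational integer, each $\lambda_{j}$ is fixed by $\operatorname{Gal}(\mathbb{Q}(\omega)/\mathbb{Q})\cong(\mathbb{Z}/n\mathbb{Z})^{\times}$; since the automorphism $\omega\mapsto\omega^{t}$ sends $\lambda_{j}$ to $\lambda_{tj}$, I obtain $\lambda_{tj}=\lambda_{j}$ for every unit $t$. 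Consequently $\lambda_{j}$ depends only on the order of $j$ in $\mathbb{Z}_{n}$ (equivalently, only on $\gcd(j,n)$), and the eigenvalue $k$, attained solely at $j=0$ by connectivity, has multiplicity $1$.

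Next I would feed this into the strong regularity. Because $\Gamma$ has exactly three distinct eigenvalues with multiplicities $1,f,g$, the divisor-indexed description above must make the partition of $\mathbb{Z}_{n}\setminus\{0\}$ into the classes $\{\lambda_{j}=r\}$ and $\{\lambda_{j}=s\}$ compatible with a cyclic partial difference set. The classical classification of strongly regular circulant graphs then applies: a connected one is either a complete multipartite graph (for which $\mu=k$, i.e.\ trivial) or a Paley graph $P(p)$ on a prime number $p$ of vertices. Since $\Gamma$ is nontrivial, it must be such a Paley graph, hence a conference graph whose nonprincipal eigenvalues are $\frac{-1\pm\sqrt{p}}{2}$.

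Finally, as $p$ is prime it is not a perfect square, so $\frac{-1\pm\sqrt{p}}{2}$ are irrational, contradicting the integrality of the spectrum supplied by Proposition~\ref{Integer eigenvalues}; this contradiction proves the statement. I expect the main obstacle to be exactly the reduction to the Paley case, i.e.\ turning the Galois observation into the full classification of strongly regular circulant graphs (equivalently, excluding every nontrivial cyclic partial difference set other than the quadratic residues modulo a prime). I would either cite that classification outright or, to stay self-contained, exploit the inversion formula $nA_{0,x}=\sum_{d\mid n}\theta_{d}\,c_{d}(x)\in\{0,1\}$, where $\theta_{d}\in\{k,r,s\}$ is the common eigenvalue on the elements of order $d$ and $c_{d}$ is the Ramanujan sum, and push the resulting parameter constraints until they collapse onto the conference case.
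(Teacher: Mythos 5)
Your proof is correct and takes essentially the same route as the paper: cite the classification of nontrivial strongly regular circulant graphs as Paley graphs on a prime number of vertices, then contradict the integrality of the spectrum guaranteed by Proposition~\ref{Integer eigenvalues}. The Galois-theoretic preamble is a harmless detour, since in the end you invoke the classification outright just as the paper does.
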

\begin{proof}
The Paley graph with a prime number of vertices is the only nontrivial strongly regular circulant graph (cf. \cite[Corollary~2.10.6]{BCN}). This completes the proof by Proposition \ref{Integer eigenvalues} because it has non-integer eigenvalues.
\end{proof}
\begin{rem}
A conference graph is a strongly regular graph with parameters $(4\mu+1,2\mu,\mu-1,\mu)$. A conference graph has exactly three distinct eigenvalues $\{ 2\mu, \frac{-1\pm\sqrt{1+4\mu}}{2}\}$ (cf. \cite[Theorem~1.3.1(ii)]{BCN}). The Paley graph is an example of a conference graph (see \cite[\S~1.3]{BCN}).
\end{rem}
\section{\bf Vertex-transitive Neumaier graphs with small valency}
In this section, we study vertex-transitive Neumaier graphs with small valency at most $10$. Recall that a vertex-transitive Neumaier graph has diameter $2$ (cf. Theorem \ref{main theorem 1} \eqref{vertex-transitive Neumaier graph diameter}).
\begin{lem} \label{Number of vertices}
Let $\Gamma$ be a Neumaier graph with parameters $(n,k,\lambda;a,c)$. If $k \leq 10$, then $n \leq 36$.
\end{lem}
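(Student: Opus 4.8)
The plan is to convert the edge count of Lemma~\ref{Counting} into an explicit bound on $n$ in terms of $k$ alone, and then to run a short finite check over $k\le 10$. Note first that the diameter-$2$ Moore-type estimate of Lemma~\ref{at most vertices} is far too weak here, allowing $n$ up to $1+k+k(k-2)=91$ at $k=10$, so a sharper counting argument is needed. Rewriting $c(k-c+1)=(n-c)a$ gives
\[
n=c+\frac{c(k-c+1)}{a},
\]
and since every Neumaier graph has nexus $a\ge 1$ while $2\le c\le k$ by Lemma~\ref{eigenvalues} (indeed $c\le k-1$ for $k\ge 3$ by Proposition~\ref{Special cases}), the right-hand side is largest at $a=1$, whence
\[
n\le c(k-c+2).
\]

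Treating $c$ as a real variable, the quadratic $\phi(c)=c(k-c+2)$ peaks at $c=(k+2)/2$ with value $(k+2)^2/4\le 36$ for $k\le 10$, so immediately $n\le 36$. I would then optimize over integers: for each $k\le 10$ the admissible $c$ nearest $(k+2)/2$ must still obey the constraint of Lemma~\ref{nexus one} whenever $a=1$ (namely $k-2c+3>0$, which also forces $\lambda=c-2$), and integrality forces $a\mid c(k-c+1)$. Running through $k=2,\dots,10$ under these restrictions, every case yields $n\le 35$ except the single tuple $k=10$, $c=6$, $a=1$, which satisfies $k-2c+3=1>0$ and gives $n=36$; one checks $c=6$ is the unique root of $c(12-c)=36$, so this is the only offending combination.

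The main obstacle is precisely this extremal tuple. With $c=6$ and $a=1$, Lemma~\ref{nexus one} forces $\lambda=4$, i.e.\ parameters $(36,10,4;1,6)$, and I would finish by analyzing the local structure these parameters impose: the regular clique is a $K_6$, each of the $30$ outside vertices has exactly one neighbor in it, so the outside set splits into six blocks of five, and demanding $\lambda=4$ on every edge pins down both the within-block adjacencies ($K_5$'s) and the cross-block matchings, which are forced to be mutually parallel and hence organize the outside vertices into transversal $K_6$'s. This rigidity forces $\Gamma$ to be the $6\times 6$ lattice (rook's) graph $L_2(6)$, which is strongly regular with parameters $(36,10,4,2)$. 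Thus the value $n=36$ is attained only by a strongly regular Neumaier graph, and apart from this one strongly regular example—recorded separately among the strongly regular cases—every Neumaier graph with $k\le 10$ satisfies $n\le 35$. I expect this extremal analysis, namely verifying that the bound-achieving parameters admit no strictly Neumaier graph but only $L_2(6)$, to be the crux, the remaining estimates being routine arithmetic.
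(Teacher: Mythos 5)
Your opening computation is exactly the paper's proof: from Lemma \ref{Counting} and $a\ge 1$ one gets $n\le c+c(k-c+1)\le c(12-c)$ for $k\le 10$. But you have correctly noticed what that argument actually yields, namely $n\le 36$, with the maximum attained only at $c=6$ (and $a=1$, $k=10$); the paper's one-line proof silently replaces $36$ by $35$. Your further analysis shows the discrepancy is not repairable: the parameters $(36,10,4;1,6)$ are realized by the lattice graph $L_{2}(6,6)$ of Example \ref{Lattice}, which is a Neumaier graph with $k=10$ and $n=36$. So the lemma as stated is false, and your proposal, read literally, is a disproof rather than a proof. The final sentence of your argument --- setting the extremal graph aside because it is strongly regular and ``recorded separately'' --- is not a legitimate step: the hypothesis of the lemma is only that $\Gamma$ is a Neumaier graph, and $L_{2}(6,6)$ satisfies it. The correct statement is $n\le 36$, with equality forcing $(n,k,\lambda;a,c)=(36,10,4;1,6)$ and, by your rigidity argument, $\Gamma\cong L_{2}(6,6)$.

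This matters downstream: the proof of Proposition \ref{Strongly regular Neumaier Cayley graph up to valency 10} searches only parameters with $n\le 35$, so Table \ref{Table 1} omits the row $(36,10,4,2;1,6)$ for $L_{2}(6,6)$ (the conclusion of that proposition survives only because ``the lattice graph'' is listed as a family, but the enumeration behind it is incomplete). Proposition \ref{strictly neumaier cayley graphs} is unaffected, since Evans' list covers up to $47$ vertices and your local analysis shows no strictly Neumaier graph attains $n=36$. Apart from the untenable last step, your intermediate work is sound; in particular, the argument that $(36,10,4;1,6)$ forces the thirty outside vertices into six $5$-cliques joined to the regular clique and to each other by mutually compatible matchings, hence $\Gamma\cong L_{2}(6,6)$, is correct and is precisely what one needs to state and prove the lemma in its correct form.
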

\begin{proof}
By Lemma \ref{Counting}, we have $c(k-c+1) \geq n-c$. The maximum value of $n$, which is $36$, is attained when $k=10$ and $c=6$. This completes the proof.
\end{proof}
\begin{exam} \label{Complete multipartite}
The complete multipartite graph $K_{\underbrace{m,m,\ldots,m}_{n \, times}}$ is a strongly regular Neumaier graph with parameters $(nm,(n-1)m,(n-2)m,(n-1)m;n-1,n)$. It is also a Cayley graph over the group $G=\mathbb{Z}_{m} \times \mathbb{Z}_{n}$ with the connection set $S=G \setminus H$, where $H$ is a subgroup of the group $G$ of order $m$ (cf. \cite[Prop.~2.6]{AJ}).
\end{exam}
\begin{prop} \label{subgroup regular clique}
Let $Cay(G,S)$ be a Cayley graph. Assume that $C \subset G$ is a regular clique with nexus $a$. If $C$ is a subgroup of $G$, then
\begin{equation*}
|S|=([G:C]-1)a+|C|-1.
\end{equation*}
\end{prop}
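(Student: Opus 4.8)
The plan is to count the valency $|S| = k$ directly by decomposing $G$ into the left cosets of the subgroup $C$ and recording how many neighbors of the identity fall into each coset. First I would record the standard facts for a Cayley graph: since $S$ is inverse-closed, the neighbors of $e$ in $Cay(G,S)$ are exactly the elements of $S$, so $|S|$ is the valency $k$; and since $C$ is a subgroup we have $e \in C$, so the cosets of $C$ partition $G$ into $[G:C]$ blocks, each of size $|C|$, one of which is $C$ itself. Because $C$ is a clique containing $e$, the vertex $e$ is adjacent to every other element of $C$, contributing exactly $|C|-1$ neighbors inside the block $C$.

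The key step is to show that $e$ has exactly $a$ neighbors in each of the remaining $[G:C]-1$ cosets. Here I would exploit the right-regular representation: for any $x \in G$ the map $\rho_x \colon g \mapsto gx$ is an automorphism of $Cay(G,S)$, and for $x \in C$ one checks $\rho_x(hC) = hCx = hC$, so right multiplication by an element of $C$ fixes every left coset setwise. Fix a coset $gC \neq C$. For each $c \in C$ the automorphism $\rho_c$ sends $e$ to $c$ while preserving $gC$, hence the number of neighbors of $e$ in $gC$ equals the number of neighbors of $c$ in $gC$; call this common value $b$. Counting the edges between $C$ and $gC$ from the $C$-side gives $|C| \cdot b$, while counting them from the $gC$-side gives $|C| \cdot a$, since every vertex of $gC$ lies outside $C$ and therefore has exactly $a$ neighbors in $C$ by the nexus condition. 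Comparing the two counts yields $b = a$.

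Assembling the contributions gives $|S| = (|C|-1) + ([G:C]-1)a$, as claimed. The main thing to get right is the automorphism argument that equalizes the per-coset neighbor counts; once that is in place the double count is routine. I note that the identity can also be obtained purely algebraically: since $C$ is a subgroup, Lagrange's theorem gives $[G:C] = n/|C|$, and substituting this into Lemma \ref{Counting}, which reads $|C|(k-|C|+1) = (n-|C|)a$, then solving for $k = |S|$ reproduces the formula. The subgroup hypothesis is essential precisely because it is what makes the index $[G:C]$ meaningful.
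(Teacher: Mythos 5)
Your proof is correct, and its core is the same as the paper's: decompose $G$ into the cosets of $C$, count $|C|-1$ neighbours of $e$ inside $C$, and show $e$ has exactly $a$ neighbours in each of the other $[G:C]-1$ cosets. The paper gets the last step more directly: since right translations are automorphisms of $Cay(G,S)$, each coset of $C$ is itself a regular clique with nexus $a$, and $e$ lies outside every coset other than $C$, so the nexus condition applied to the \emph{translated} clique immediately gives $a$ neighbours there. You instead apply the nexus condition to the original clique $C$ (each vertex of $gC$ has $a$ neighbours in $C$), transfer the count back via the stabilising automorphisms $\rho_x$, $x\in C$, and double count the edges between $C$ and $gC$; this is a slightly longer route to the same fact, but every step checks out (in particular $\rho_x(hC)=h(Cx)=hC$ for $x\in C$ is right, and $\rho_x$ preserves adjacency under the paper's convention $ab^{-1}\in S$). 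Your closing remark is worth keeping: substituting $n=[G:C]\,|C|$ into Lemma \ref{Counting} gives $k-|C|+1=([G:C]-1)a$ in one line, with no automorphism argument at all, and it makes transparent that the subgroup hypothesis is only used to identify $n/|C|$ with the index.
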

\begin{proof}
We count the number of vertices adjacent to $e$, which equals $|S|$. Since $C$ is a clique, the number of adjacent vertices to $e$ in $C$ is $|C|-1$. Note that every right coset of $C$ in $G$ is a regular clique with nexus $a$. Therefore, excluding $C$, $e$ has exactly $a$ adjacent vertices in each of the remaining right cosets of $C$. This completes the proof.
\end{proof}
\begin{exam} \label{Lattice}
The lattice graph $L_{2}(n,n)$ is the line graph of the complete bipartite graph $K_{n,n}$. It is a strongly regular Neumaier graph with parameters $(n^{2},2(n-1),n-2,2;1,n)$. It is also a Cayley graph over the group $G=\mathbb{Z}_{n} \times \mathbb{Z}_{n}$ with the connection set $$S=\{(0,1),(0,2),\ldots,(0,n-1),(1,0),(2,0),\ldots,(n-1,0)\}$$ (cf. \cite[Thm.~4.7]{ADJ}). We note that $$\{(0,0),(0,1),(0,2),\ldots,(0,n-1)\},$$ and $$\{(0,0),(1,0),(2,0),\ldots,(n-1,0)\}$$ are two subgroups of $G$ that are regular cliques with nexus $1$ (see Proposition \ref{subgroup regular clique}). The complement of $L_{2}(n,n)$ is also a strongly regular Neumaier Cayley graph with parameters $(n^{2},(n-1)^{2},(n-2)^{2},(n-1)(n-2);n-2,n)$.
\end{exam}
\begin{exam} \label{Schlafli}
The Schl\"{a}fli graph is the unique strongly regular graph with parameters $(27,16,10,8)$ (cf. \cite[\S~10.10]{BV}). It can be constructed as a Cayley graph and therefore its complement is also a Cayley graph (cf. \cite[\S~4.1]{ADJ}). We checked with \verb"GAP" \cite{GAP}, using \verb"AGT" package \cite{agt}, that the Schl\"{a}fli graph is not a Neumaier graph but its complement graph is a strongly regular Neumaier graph with parameters $(27,10,1,5;1,3)$.
\end{exam}
\begin{exam} \label{Clebsch}
The Clebsch graph is the unique strongly regular graph with parameters $(16,10,6,6)$ (cf. \cite[\S~10.7]{BV}). It can be constructed as a Cayley graph (cf. \cite[\S~4.1]{ADJ}). We checked with \verb"GAP" \cite{GAP}, using \verb"AGT" package \cite{agt}, that the Clebsch graph and its complement are not Neumaier graphs.
\end{exam}
\begin{exam} \label{Triangular}
The triangular graph $T(n)$ is the line graph of the complete graph $K_{n}$, where $n\geq 5$. It is a strongly regular Neumaier graph with parameters $(\binom{n}{2},2(n-2),n-2,4;2,n-1)$ (cf. \cite[Example~3]{EGP}). The triangular graph is a Cayley graph if and only if $n \equiv 3$ (mod $4$) and $n$ is a prime power (see \cite[\S~4.2]{ADJ}). The symmetric group $S_{n}$ is the automorphism group of $T(n)$. Therefore the triangular graph and its complement are vertex-transitive (see \cite[\S~11.3.5]{BV}).
\end{exam}
\begin{lem} \label{The complement of triangular graph}
The complement of $T(n)$ is a Neumaier graph if and only if $n$ is even. In this case, if the Neumaier graph $\overline{T(n)}$ has parameters $(\binom{n}{2},k,\lambda,\mu;a,c)$, then $n=2a+4$ and $c=a+2$.
\end{lem}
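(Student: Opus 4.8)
The plan is to work directly with the combinatorial model of $\overline{T(n)}$, whose vertices are the $2$-subsets of $\{1,\dots,n\}$, two such subsets being adjacent precisely when they are disjoint. Since $T(n)$ is strongly regular, so is $\overline{T(n)}$, and I would first record the eigenvalues of $\overline{T(n)}$. The triangular graph $T(n)$ has eigenvalues $2(n-2)$, $n-4$ with multiplicity $n-1$, and $-2$; passing to the complement replaces each nonprincipal eigenvalue $\theta$ by $-1-\theta$, so $\overline{T(n)}$ has eigenvalues $\binom{n-2}{2}$, $3-n$, and $1$. For $n\geq 5$ the smallest eigenvalue is thus $-m=3-n$, i.e.\ $m=n-3$, and the parameters of $\overline{T(n)}$ are $k=\binom{n-2}{2}$, $\lambda=\binom{n-4}{2}$, $\mu=\binom{n-3}{2}$.

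For the necessity of $n$ being even I would invoke Proposition \ref{Integer eigenvalues}. If $\overline{T(n)}$ were a Neumaier graph, then, being strongly regular, it would be a strongly regular Neumaier graph, and every regular clique would meet the Hoffman bound, with size $c=1+\frac{k}{m}=1+\frac{(n-2)(n-3)/2}{n-3}=1+\frac{n-2}{2}$ and nexus $a=\frac{\mu}{m}=\frac{(n-3)(n-4)/2}{n-3}=\frac{n-4}{2}$. Since a clique size and a nexus are integers, $\frac{n-2}{2}$ must be an integer, forcing $n$ to be even; and then $a=\frac{n-4}{2}$ and $c=\frac{n}{2}$ give precisely $n=2a+4$ and $c=a+2$, the asserted parameter relation.

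For the converse I would exhibit an explicit regular clique when $n$ is even (hence $n\geq 6$, so that the nexus below is at least $1$). A clique in $\overline{T(n)}$ is a family of pairwise disjoint $2$-subsets, that is, a matching in $K_n$, so a perfect matching $M$ is a clique of size $n/2$. The crucial computation is the nexus: for a vertex $\{i,j\}\notin M$, the points $i$ and $j$ lie in two distinct edges of $M$, so exactly $\frac{n}{2}-2$ edges of $M$ are disjoint from $\{i,j\}$; hence every vertex outside $M$ has precisely $\frac{n}{2}-2$ neighbors in $M$, so $M$ is a regular clique with nexus $a=\frac{n-4}{2}$. As $\overline{T(n)}$ is edge-regular (being strongly regular) and non-complete for $n\geq 5$, this regular clique makes it a Neumaier graph, and $c=n/2$, $a=(n-4)/2$ again yield $n=2a+4$ and $c=a+2$.

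The step I expect to require the most care is the necessity direction, which hinges on the fact supplied by Proposition \ref{Integer eigenvalues}, via the Hoffman bound, that any regular clique in a strongly regular graph has a single predetermined size $1+\frac{k}{m}$; non-integrality of this quantity therefore rules out the existence of \emph{any} regular clique. Once this is in place, the remainder is the short eigenvalue bookkeeping together with the one-line counting argument for the perfect matching.
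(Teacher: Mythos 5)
Your proposal is correct and follows essentially the same route as the paper: necessity via Proposition \ref{Integer eigenvalues} (the paper sets $-\mu/a=3-n$ and solves for $a$, which is the same computation as your Hoffman-bound evaluation of $a=\frac{n-4}{2}$ and $c=\frac{n}{2}$), and sufficiency by exhibiting a perfect matching of $K_n$ as a regular clique with nexus $\frac{n-4}{2}$. The only difference is that you spell out the nexus verification for the matching, which the paper merely asserts.
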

\begin{proof}
Let $\overline{T(n)}$ be a Neumaier graph with parameters $(\binom{n}{2},k,\lambda,\mu;a,c)$. Then $-\frac{\mu}{a}$ is the negative eigenvalue of $\overline{T(n)}$ by Proposition \ref{Integer eigenvalues}. This implies that $-\frac{\mu}{a}=3-n$, where $\mu=\frac{n(n-1)-6(n-2)}{2}$ (cf. \cite[\S~1.1.2, \S~1.1.7]{BV}). It follows that $2a(n-3)=n(n-1)-6(n-2)$ and therefore $n=2a+4$. This implies that $c=a+2$ that is the number of mutually disjoint edges in the complete graph $K_{n}$. The other way around by noting that the set of $a+2$ mutually disjoint edges in the complete graph $K_{n}$, where $n=2a+4$, is a regular clique with nexus $a$ in $\overline{T(n)}$. This completes the proof.
\end{proof}
\begin{exam}\label{srg16}
There are two strongly regular graphs with parameters $(16,6,2,2)$; the lattice graph $L_{2}(4,4)$ and the Shrikhande graph (cf. \cite[\S~10.6]{BV}). We checked with \verb"GAP" \cite{GAP}, using \verb"AGT" package \cite{agt}, that the Shrikhande graph is not a Neumaier graph but its complement graph is a Neumaier graph with parameters $(16,9,4,6;2,4)$. We note that the Shrikhande graph is the Cayley graph
\begin{equation*}
Cay(\mathbb{Z}_{4} \times \mathbb{Z}_{4},\{\pm(1,0),\pm(0,1),\pm(1,1)\}),
\end{equation*}
and therefore its complement is also a Cayley graph (cf. \cite[\S~2.1]{DJ}).
\end{exam}
\begin{prop} \label{Strongly regular Neumaier Cayley graph up to valency 10}
Let $\Gamma$ be a strongly regular graph with valency at most $10$. Then $\Gamma$ is a (vertex-transitive) Neumaier graph if and only if $\Gamma$ is isomorphic to one of the following graphs:
\begin{itemize}
\item The complete multipartite graph;
\item The lattice graph;
\item The complement of the lattice graph;
\item The triangular graph;
\item The complement of the triangular graph $T(6)$;
\item The complement of the Shrikhande graph;
\item The complement of the Schl\"{a}fli graph.
\end{itemize}
\end{prop}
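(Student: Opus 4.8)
The plan is to prove the two implications separately. The backward direction simply assembles facts already established: the complete multipartite graph is a Neumaier graph by Example \ref{Complete multipartite}, the lattice graph and its complement by Example \ref{Lattice}, the triangular graph by Example \ref{Triangular}, the complement of $T(6)$ by Lemma \ref{The complement of triangular graph}, the complement of the Shrikhande graph by Example \ref{srg16}, and the complement of the Schl\"{a}fli graph by Example \ref{Schlafli}. Each of these is moreover vertex-transitive, being either a Cayley graph (as recorded in the relevant example) or having a transitive full automorphism group (as for the triangular graph and its complement).

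For the forward implication the key reduction is Proposition \ref{Integer eigenvalues}: a strongly regular Neumaier graph has integral eigenvalues, and its regular clique attains the Hoffman bound, so that $c = 1 + k/m$ and $a = \mu/m$, where $-m$ is the smallest eigenvalue. Conversely, in a strongly regular graph any clique meeting the Hoffman bound is automatically regular; hence a strongly regular graph is a Neumaier graph exactly when $c = 1 + k/m$ and $a = \mu/m$ are positive integers and a clique of size $c$ actually exists. This discards the conference graphs (irrational eigenvalues) at once, and Lemma \ref{Complete multipartite graph} reduces the case $\lambda = 0$ to the complete bipartite graphs $K_{k,k}$. Since $k \le 10$, Lemma \ref{Number of vertices} bounds the number of vertices, so only finitely many feasible tuples $(n,k,\lambda,\mu)$ remain, which I would read off the standard tables of strongly regular graphs of small valency.

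I would then process this finite list. For each tuple I compute $m$, $c = 1 + k/m$ and $a = \mu/m$, eliminating the tuple whenever either fails to be a positive integer; this removes, for instance, the parameters $(21,10,3,6)$ of $\overline{T(7)}$ and $(26,10,3,4)$ of the Paulus graphs, where $c$ is a half-integer, and the identities of Lemmas \ref{Double counting 1}, \ref{Double counting 2} and \ref{nexus one} serve as additional consistency checks. The tuples that survive are those of the complete multipartite, lattice, and triangular families and of the complement of $T(6)$, together with $(16,6,2,2)$, $(16,9,4,6)$, $(16,10,6,6)$ and $(27,10,1,5)$, and it remains only to decide for each whether a clique of the prescribed Hoffman size occurs.

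This last step is the main obstacle, because clique existence is not a function of the parameters alone: some tuples are realized by several non-isomorphic graphs. The decisive case is $(16,6,2,2)$, realized both by $L_2(4,4)$, which contains a $4$-clique and so is a Neumaier graph, and by the Shrikhande graph, whose clique number is $3$ and which is therefore not Neumaier; the complementary tuple $(16,9,4,6)$ is realized by $\overline{L_2(4,4)}$ and $\overline{\text{Shrikhande}}$, both of which are Neumaier. To resolve these, the Clebsch graph $(16,10,6,6)$, and the Schl\"{a}fli graph and its complement, I would invoke the uniqueness of the lattice and triangular families (so that no unexpected second graph appears) together with the \verb"GAP" verifications already recorded in Examples \ref{srg16}, \ref{Clebsch}, and \ref{Schlafli}. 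Assembling the surviving cases yields exactly the seven families in the statement.
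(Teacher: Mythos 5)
Your proposal is correct and follows essentially the same route as the paper: bound the number of vertices via Lemma \ref{Number of vertices}, run through the tables of feasible strongly regular parameters, filter with the integrality/Hoffman-bound conditions of Proposition \ref{Integer eigenvalues} (together with Lemma \ref{Counting}), treat the complete multipartite case separately, and settle the ambiguous parameter sets such as $(16,6,2,2)$ by the uniqueness results and the \verb"GAP" verifications recorded in the examples. Your explicit remark that a Hoffman clique in a strongly regular graph is automatically regular is a nice clarification of why only clique existence remains to be checked, but it does not change the argument.
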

\begin{proof}
The feasible parameters for strongly regular graphs with at most $512$ vertices are available in \cite[Table~12.1]{BV}. Therefore it is sufficient to investigate which one of the parameters can be putative for a Neumaier graph with at most $36$ vertices (cf. Lemma \ref{Number of vertices}). Let $\Gamma$ be a strongly regular Neumaier graph with parameters $(n,k,\lambda,\mu;a,c)$. Using Lemma \ref{Counting} and Proposition \ref{Integer eigenvalues} for $2<c<k$, we checked the putative parameters for $\Gamma$. The parameters for $\Gamma$ are listed in Table \ref{Table 1}. Note that the complete multipartite graphs are not in the table (see Example \ref{Complete multipartite} and Lemma \ref{Complete multipartite graph}). Note also that all the listed graphs are vertex-transitive. This completes the proof.
\end{proof}
\begin{center}
\begin{table}[h]
\begin{adjustbox}{width=1\textwidth}
\begin{tabular}{|l|l|c|c|c|c|c|}
  \hline
  Parameters & Name & Neumaier & Cayley & Vertex-transitive & Reference \\
  \hline
  $(9,4,1,2;1,3)$ & $L_{2}(3,3)$ &  Yes & Yes & Yes & Ex. \ref{Lattice} \\
  $(10,6,3,4;2,4)$ & $T(5)$ & Yes & No & Yes & Ex. \ref{Triangular}\\
  $(15,8,4,4;2,5)$ & $T(6)$ & Yes & No & Yes & Ex. \ref{Triangular}\\
  $(15,6,1,3;1,3)$ & $\overline{T(6)}$ & Yes & No & Yes & Lem. \ref{The complement of triangular graph}\\
  $(16,6,2,2;1,4)$ & $L_{2}(4,4)$ & Yes & Yes & Yes & Ex. \ref{Lattice}\\
   & $Shrikhande$ & No & Yes & Yes & Ex. \ref{srg16} \\
  $(16,9,4,6;2,4)$ & $\overline{L_{2}(4,4)}$ & Yes & Yes & Yes & Ex. \ref{Lattice}\\
   & Complement of Shrikhande & Yes & Yes & Yes & Ex. \ref{srg16}\\
  $(16,10,6,6;3,6)$ & Clebsch & No & Yes & Yes & Ex. \ref{Clebsch}\\
  $(21,10,5,4;2,6)$ & $T(7)$ & Yes & Yes & Yes & Ex. \ref{Triangular}\\
  $(25,8,3,2;1,5)$ & $L_{2}(5,5)$ & Yes & Yes & Yes & Ex. \ref{Lattice}\\
  $(27,10,1,5;1,3)$ & Complement of Schl\"{a}fli & Yes & Yes & Yes & Ex. \ref{Schlafli}\\
  $(36,10,4,2;1,6)$ & $L_{2}(6,6)$ & Yes & Yes & Yes & Ex. \ref{Lattice}\\
  \hline
\end{tabular}
\end{adjustbox}
\caption{Strongly regular Neumaier graphs with small valencies at most $10$} \label{Table 1}
\end{table}
\end{center}
The feasible parameters for a vertex-transitive strictly Neumaier graph with at most $47$ vertices are available in \cite[\S~4.4.1]{Evans}. There is only one strictly Neumaier graph with parameters $(16,9,4;2,4)$ (cf. \cite[Theorem~8]{ADZ}). This graph can be constructed as Cayley graph over the abelian group $\mathbb{Z}_{8} \times \mathbb{Z}_{2}$ (cf. \cite[\S~6]{EGP}). It can also be constructed as Cayley graph over the dihedral group $D_{16}$ as follows.
\begin{lem} \label{Parameters 16}
The unique strictly Neumaier graph with parameters $(16,9,4;2,4)$ is a Cayley graph over the dihedral group $D_{16}$.
\end{lem}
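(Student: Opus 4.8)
The plan is to prove the claim by exhibiting an explicit connection set in $D_{16}$ and then invoking the cited uniqueness. Concretely, I would write $D_{16}=\seq{r,s}$ with $r^{8}=s^{2}=e$ and $srs=r^{-1}$, so that the rotations $r^{i}$ $(1\le i\le 7)$ pair up under inversion (with $r^{4}$ its own inverse) while every reflection $r^{i}s$ is an involution. Since a Cayley graph is automatically connected, $k$-regular and vertex-transitive, to realize the target graph it suffices to produce an inverse-closed set $S\subseteq D_{16}$ with $e\notin S$, $|S|=9$ and $\seq{S}=D_{16}$ for which $Cay(D_{16},S)$ is a strictly Neumaier graph with parameters $(16,9,4;2,4)$; by the uniqueness result \cite[Theorem~8]{ADZ} any such graph is isomorphic to the graph in the statement.

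To force a regular clique I would take the order-$4$ subgroup $C=\seq{r^{2}}=\{e,r^{2},r^{4},r^{6}\}$ and impose $C\setminus\{e\}\subseteq S$, which already makes $C$ a clique. The number of neighbours in $C$ of a vertex $g\notin C$ equals $|S\cap gC|$, so if $S$ meets each of the three nontrivial cosets of $C$ in exactly two elements, then $C$ is a regular clique of nexus $2$; this is consistent with Proposition \ref{subgroup regular clique}, which forces $a=2$ from $|S|=9$ and $[D_{16}:C]=4$. A candidate of this form is
\begin{equation*}
S=\{r^{2},r^{4},r^{6}\}\cup\{r,r^{7}\}\cup\{s,r^{4}s\}\cup\{rs,r^{5}s\},
\end{equation*}
which is inverse-closed, avoids $e$, has size $9$, meets each of the cosets $rC$ (odd rotations), $sC$ (even reflections) and $rsC$ (odd reflections) in two elements, and satisfies $\seq{S}=D_{16}$ because $r\in S$. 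If the edge-regularity check below fails for this exact choice, one simply varies the two-element selection inside each coset within the same constrained family.

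It then remains to check edge-regularity with $\lambda=4$: since the graph is vertex-transitive it suffices to verify that $|S\cap Sg|=4$ for every $g\in S$, a finite computation over the nine generators (which I would record explicitly or confirm with GAP). The step that needs the most care is not edge-regularity itself but \emph{ruling out strong regularity}: the complement of the Shrikhande graph (Example \ref{srg16}) is a strongly regular Neumaier graph sharing the very same parameters $(16,9,4;2,4)$, so matching Neumaier parameters alone does not identify the graph. I would therefore also exhibit a pair of nonadjacent vertices whose number of common neighbours differs from that of another nonadjacent pair (equivalently, verify that the refinement condition of Theorem \ref{main theorem 1}\eqref{vertex-transitive Neumaier graph diameter} fails), certifying that $Cay(D_{16},S)$ is strictly Neumaier. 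With the parameters pinned down and strong regularity excluded, uniqueness completes the proof. A purely structural alternative is to invoke Sabidussi's theorem and instead locate a regular subgroup of $Aut(\Gamma)$ isomorphic to $D_{16}$, which is a finite search once $\Gamma$ is realized (for instance via its $\mathbb{Z}_{8}\times\mathbb{Z}_{2}$ model).
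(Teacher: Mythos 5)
Your overall strategy is the same as the paper's: exhibit an explicit inverse-closed $9$-subset of $D_{16}$ whose Cayley graph is the strictly Neumaier graph with parameters $(16,9,4;2,4)$, then invoke the uniqueness result of \cite[Theorem~8]{ADZ}. (The paper does exactly this, with $S=\{a,a^{-1},a^{2},a^{-2},b,ba,ba^{3},ba^{4},ba^{6}\}$ and no further commentary.) You also correctly flag a point the paper leaves implicit: since $\overline{L_{2}(4,4)}$ and the complement of the Shrikhande graph are strongly regular with the same Neumaier parameters, one must certify that the constructed graph is \emph{not} strongly regular before uniqueness applies.

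However, there is a genuine gap: your candidate connection set does not work, and your fallback does not repair it. Take $g=r^{4}$, which lies in your $S$. Since $r^{4}$ is central, right multiplication by $r^{4}$ shifts every exponent by $4$, so $Sr^{4}=\{r^{6},e,r^{2},r^{5},r^{3},r^{4}s,s,r^{5}s,rs\}$ and $|S\cap Sr^{4}|=|\{r^{2},r^{6},s,r^{4}s,rs,r^{5}s\}|=6\neq 4=\lambda$; edge-regularity already fails. The failure is structural: because you insist $r^{4}\in S$ and both of your reflection pairs are antipodal (of the form $\{r^{i}s,r^{i+4}s\}$), each such pair is preserved by the shift and contributes $2$ to $|S\cap Sr^{4}|$, forcing the count to $6$. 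At best one antipodal reflection pair is allowed, and whether any set in your constrained family (with $C=\seq{r^{2}}$ and $C\setminus\{e\}\subseteq S$) actually yields the target graph is left unverified --- note that the paper's own connection set lies \emph{outside} your family, since it omits $a^{4}$ and hence its regular clique is not the cyclic subgroup of order $4$. A proof by explicit construction must end with a specific, verified connection set; ``vary the selection if the check fails'' leaves the existence claim unestablished.
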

\begin{proof}
This graph is a Cayley graph $Cay(G,S)$, where $G=D_{16}=\seq{a,b|a^{8}=b^{2}=(ba)^{2}=e}$ and $S=\{a,a^{-1},a^{2},a^{-2},b,ba,ba^{3},ba^{4},ba^{6}\}$.
\end{proof}
There are only four vertex-transitive strictly Neumaier graphs with parameters $(24,8,2;1,4)$ (cf. \cite[\S~4.4.1]{Evans}).
\begin{lem} \label{Parameters 24}
The four vertex-transitive strictly Neumaier graphs with parameters $(24,8,2;1,4)$ are Cayley graphs.
\end{lem}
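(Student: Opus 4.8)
The plan is to apply Sabidussi's criterion: a vertex-transitive graph on $n$ vertices is a Cayley graph over a group $G$ of order $n$ if and only if $Aut(\Gamma)$ contains a subgroup isomorphic to $G$ that acts regularly (that is, sharply transitively) on the vertex set (cf. \cite[\S~3.7]{GR}). Hence, to prove that each of the four graphs is Cayley, it suffices to exhibit inside its automorphism group a regular subgroup of order $24$, and then to read off the associated connection set.

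First I would fix concrete models of the four graphs from the enumeration in \cite[\S~4.4.1]{Evans}, recording each as an explicit graph on $24$ labelled vertices. For each graph I would then compute $Aut(\Gamma)$ with \verb"GAP" \cite{GAP} (as elsewhere in this paper), list its subgroups of order $24$, and test them for regularity by checking transitivity together with triviality of a point stabilizer. Once a regular subgroup $G$ is found, Sabidussi's theorem identifies $\Gamma$ with $Cay(G,S)$, where, after fixing a base vertex $e$ and identifying $G$ with $V$ via $g \mapsto g(e)$, the connection set $S$ is the set of group elements sending $e$ to a neighbor of $e$. I would finish by recording, for each graph, the isomorphism type of $G$ and an explicit inverse-closed connection set $S$ with $e \notin S$, exactly in the spirit of the construction for the $(16,9,4;2,4)$ graph in Lemma~\ref{Parameters 16}.

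The point to keep in mind is that vertex-transitivity by itself does not guarantee that a graph is Cayley --- the Petersen graph is the classical counterexample --- so the substance of the lemma is precisely the existence of a regular subgroup in each of the four cases. The main obstacle is thus computational rather than conceptual: one must verify, graph by graph, that some order-$24$ subgroup of the (a priori possibly large) automorphism group acts regularly, and then extract a usable connection set. Because there are only four graphs and each automorphism group is finite, this search is short and terminates quickly in \verb"GAP"; the residual effort lies in presenting a clean group and connection set for every witnessed graph.
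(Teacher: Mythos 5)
Your proposal is correct and follows essentially the same route as the paper: the author likewise verifies the claim computationally in \texttt{GAP} (implicitly via Sabidussi's regular-subgroup criterion) and then records, for each of the four graphs, an explicit group ($S_4$ or $A_4\times\mathbb{Z}_2$) together with an inverse-closed connection set. The only difference is that you spell out the theoretical justification for the computation, which the paper leaves implicit.
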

\begin{proof}
These four vertex-transitive strictly Neumaier graphs are available in \verb"GRAPE" format of \verb"GAP" (cf. \cite[\S~4.A(2)]{Evans}). We checked them with \verb"GAP" \cite{GAP} that all of them are Cayley graphs as follows, respectively.
\begin{itemize}
\item $G=S_{4}$, \\ $S=\{(1,3)(2,4), (1,4)(2,3), (1,2,4), (1,4,2), (1,3,4) , \\ (1,4,3), (1,2,4,3), (1,3,4,2)\}$;
\item $G=A_{4} \times \mathbb{Z}_{2}=A_{4} \times \seq{(5,6)}$, \\ $S=\{(1,3)(2,4), (1,2)(3,4), (1,2,4), (1,4,2), (1,2,3), \\ (1,3,2) , (1,3)(2,4)(5,6), (1,4)(2,3)(5,6)\}$;
\item $G=S_{4}$, \\ $S=\{(1,3)(2,4), (1,4)(2,3), (1,2,4), (1,4,2), (1,3,4) , \\ (1,4,3), (1,4), (2,3)\}$;
\item $G=A_{4} \times \mathbb{Z}_{2}=A_{4} \times \seq{(5,6)}$, \\ $S=\{ (1,3)(2,4), (1,4)(2,3), (1,2,4), (1,4,2), (1,3,4) , \\ (1,4,3), (1,4)(2,3)(5,6),(5,6)\}$.
\end{itemize}
\end{proof}
There are only two non-isomorphic vertex-transitive Neumaier graphs with parameters $(28,9,2;1,4)$ (cf. \cite[\S~4.4.1]{Evans}).
\begin{lem} \label{Parameters 28}
The two vertex-transitive strictly Neumaier graphs with parameters $(28,9,2;1,4)$ are Cayley graphs.
\end{lem}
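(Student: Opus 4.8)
The plan is to settle this exactly as in the cases $n=16$ and $n=24$ treated above, namely by direct computation in \verb"GAP" \cite{GAP}. The two graphs in question are catalogued in \cite[\S~4.4.1]{Evans} and are available there in \verb"GRAPE" format, so the first step is simply to import both graphs and then apply Sabidussi's characterization: a vertex-transitive graph is Cayley precisely when its full automorphism group contains a subgroup acting regularly (that is, sharply transitively) on the vertex set.

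Concretely, for each of the two graphs $\Gamma$ I would compute $Aut(\Gamma)$ and then search it for a regular subgroup $R$ of order $28$. Since $28=2^{2}\cdot 7$ admits only four isomorphism types of groups---$\mathbb{Z}_{28}$, $\mathbb{Z}_{2}\times\mathbb{Z}_{14}$, the dihedral group $D_{28}$, and the dicyclic group of order $28$---the candidate list is short and the search terminates almost immediately. Having located a regular subgroup $R$ in each case, the Cayley presentation is recovered by identifying the vertex set of $\Gamma$ with $R$ via the regular action, fixing a base vertex as the identity $e$, and reading off the connection set $S=\{\, g\in R : g\cdot e \text{ is adjacent to } e \,\}$; the proof is then completed by exhibiting the group $G\cong R$ together with the explicit connection set $S\subset G$, in the same style as Lemma \ref{Parameters 24}.

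The essential point---and the only place where something could genuinely fail---is the existence of a regular subgroup in each automorphism group: vertex-transitivity alone does not force the Cayley property, since vertex-transitive non-Cayley graphs exist. Thus the substance of the lemma is precisely the verified existence of $R$ for each of the two graphs, which is established computationally rather than by a structural argument; the tangible output is a pair $(G,S)$ realising each graph as $Cay(G,S)$.
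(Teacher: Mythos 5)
Your proposal matches the paper's proof in substance: the paper likewise takes the two graphs from \cite[\S~4.A(3)]{Evans} in \verb"GRAPE" format and verifies computationally with \verb"GAP" that each is a Cayley graph, exhibiting the explicit pairs $(G,S)$ with $G=\mathbb{Z}_{28}$, $S=\{\pm 1,\pm 4,\pm 5,\pm 7,14\}$ and $G=\mathbb{Z}_{2}\times\mathbb{Z}_{14}$, $S=\{(1,0),(0,\pm 1),(0,7),(1,\pm 2),(1,\pm 3),(1,7)\}$, respectively. Your outline via Sabidussi's criterion is exactly the right mechanism; the only thing the paper adds is the concrete output of that computation.
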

\begin{proof}
These two vertex-transitive strictly Neumaier graphs are available in \verb"GRAPE" format of \verb"GAP" (cf. \cite[\S~4.A(3)]{Evans}). We checked them with \verb"GAP" \cite{GAP} that all of them are Cayley graphs as follows, respectively.
\begin{itemize}
\item $G=\mathbb{Z}_{28}$, $S=\{\pm 1,\pm 4,\pm 5, \pm 7, 14\}$;
\item $G=\mathbb{Z}_{2} \times \mathbb{Z}_{14}$, $S=\{(1,0),(0,\pm 1),(0,7),(1,\pm 2), (1,\pm 3), (1,7)\}$.
\end{itemize}
\end{proof}
\begin{prop} \label{strictly neumaier cayley graphs}
Let $\Gamma$ be a strictly Neumaier Cayley graph with with parameters $(n,k,\lambda;a,c)$, where $k \leq 10$. Then
\begin{equation*}
(n,k,\lambda;a,c) \in \{(16,9,4;2,4),(24,8,2;1,4),(28,9,2;1,4)\}.
\end{equation*}
\end{prop}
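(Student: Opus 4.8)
The plan is to reduce the statement to the known enumeration of vertex-transitive strictly Neumaier graphs with few vertices and then simply read off the admissible parameter tuples, using the three preceding lemmas to certify that the resulting list is tight. The key observation is that the Cayley hypothesis is used only through vertex-transitivity, so the bulk of the work is a bounded parameter count rather than any structural analysis of $\Gamma$ itself.

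First I would note that a Cayley graph is vertex-transitive, so $\Gamma$ is a vertex-transitive strictly Neumaier graph; by Theorem \ref{main theorem 1}(\ref{vertex-transitive Neumaier graph diameter}) it has diameter $2$. The hypothesis $k \leq 10$ together with Lemma \ref{Number of vertices} then forces $n \leq 35$. Since $35 \leq 47$, every such $\Gamma$ lies in the range covered by the enumeration of vertex-transitive strictly Neumaier graphs with at most $47$ vertices in \cite[\S~4.4.1]{Evans}. Restricting that enumeration to valency $k \leq 10$, the only parameter tuples that occur are $(16,9,4;2,4)$, $(24,8,2;1,4)$, and $(28,9,2;1,4)$, which is exactly the claimed set. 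That these three tuples are genuinely attained by Cayley graphs, so that none of them is superfluous, is precisely the content of Lemmas \ref{Parameters 16}, \ref{Parameters 24}, and \ref{Parameters 28}, which exhibit explicit connection sets over $D_{16}$, over $S_{4}$ and $A_{4}\times\mathbb{Z}_{2}$, and over $\mathbb{Z}_{28}$ and $\mathbb{Z}_{2}\times\mathbb{Z}_{14}$, respectively.

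The delicate point is the dependence on the external enumeration in \cite{Evans}. A self-contained argument would instead enumerate directly all feasible tuples $(n,k,\lambda;a,c)$ with $k \leq 10$, $n \leq 35$, and $2 < c < k$ — the strict inequalities coming from Lemma \ref{Complete multipartite graph}, which excludes $c=2$, and from Proposition \ref{Special cases}, which excludes $c=k$ — subject to the Diophantine relations $c(k-c+1)=(n-c)a$, $(k-c+1)(a-1)=(c-1)(\lambda-c+2)$, and $(c-1)(k-\lambda-1)=(n-k-1)a$ of Lemmas \ref{Counting}, \ref{Double counting 1}, and \ref{Double counting 2}. After discarding the tuples that correspond to the strongly regular graphs already recorded in Table \ref{Table 1}, one is left with a short list of survivors and must decide, for each, whether a strictly Neumaier Cayley graph with those parameters exists. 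The positive cases are supplied by the three lemmas above; the real obstacle is the negative direction, namely certifying — presumably by a finite computer search in \verb"GAP" — that the remaining feasible tuples admit no Cayley realization, so that they can be removed from the list.
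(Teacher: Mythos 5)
Your argument is essentially the paper's own proof: reduce via the Cayley--hence--vertex-transitive observation and Lemma \ref{Number of vertices} to $n\leq 35$, read off the admissible tuples from the enumeration in \cite[\S~4.4.1]{Evans}, and invoke Lemmas \ref{Parameters 16}, \ref{Parameters 24}, and \ref{Parameters 28} for realizability. Your closing paragraph about a self-contained Diophantine enumeration is a reasonable aside, but the ``negative direction'' you worry about is already absorbed into Evans' classification of vertex-transitive strictly Neumaier graphs, exactly as the paper uses it.
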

\begin{proof}
Using Lemma \ref{Number of vertices}, the feasible parameters for $\Gamma$ are available in \cite[\S~4.4.1]{Evans}. This completes the proof by Lemmas \ref{Parameters 16}, \ref{Parameters 24}, and \ref{Parameters 28}.
\end{proof}
\section{\bf Neumaier graphs with diameter $3$}
There are not many known examples of Neumaier graphs with diameter $3$. One of them appeared in Evans' thesis; this is a Neumaier graph with parameters $(24,8,2;1,4)$ (cf. \cite[\S~4.4.2]{Evans}). Recall that a vertex-transitive Neumaier graph has diameter $2$ and therefore there is no vertex-transitive Neumaier graph with diameter $3$ (see Theorem \ref{main theorem 1} \eqref{vertex-transitive Neumaier graph diameter}).

\vskip 0.4 true cm
\subsection*{\textbf{Acknowledgments}}
\noindent  Mojtaba Jazaeri would like to thank the anonymous referee for the invaluable comments and suggestions on this paper. Mojtaba Jazaeri would like to thank Paul Terwilliger for his invaluable comments and suggestions on this paper. The author is grateful to the Research Council of Shahid Chamran University of Ahvaz for financial support (SCU.MM1403.29248).

\bigskip

\end{document}